\newtheorem{theorem}{Theorem}[section]
\newtheorem{lemma}[theorem]{Lemma}
\newtheorem{corollary}[theorem]{Corollary}
\newtheorem{definition}[theorem]{Definition}
\newtheorem{remark}[theorem]{Remark}
\title{Stable parabolic Higgs bundles of rank two and singular hyperbolic metrics}
\author{Yu Feng \and Bin Xu}
\date{}
\begin{document}

\maketitle
\begin{abstract}
In this paper, we construct a stable parabolic Higgs bundle of rank two, which corresponds to the uniformization associated with a conformal hyperbolic metric on a compact Riemann surface $\overline{X}$ with prescribed singularities. This provides an alternative proof of the classical existence theorem for singular hyperbolic metrics, originally established by Heins ({\it Nagoya Math. J.} 21 (1962), 1-60). We also introduce a family of stable parabolic Higgs bundles of rank two on $\overline{X}$, parametrized by a nonempty open subset of a complex vector space. These bundles correspond to singular hyperbolic metrics with the same type of singularity as the original, but are defined on deformed Riemann surfaces of $\overline{X}$. Thus, we extend partially the final section of Hitchin's celebrated work ({\it Proc. London Math. Soc.} 55(3) (1987), 59-125) to the context of hyperbolic metrics with singularities.
\end{abstract}

\section{Introduction}\label{section-introduction}

In his seminal paper, Hitchin \cite{Hitchin:1987} constructed a specific stable Higgs bundle of rank two on a compact Riemann surface of genus greater than one, and solved the self-duality equations to obtain a conformal hyperbolic metric on the surface. Furthermore, by deforming the Higgs field, he recovered all constant Gaussian curvature $-1$ Riemannian metrics on the underlying topological surface. Subsequent studies have partially extended these results to hyperbolic metrics with singularities. Biswas, Bradlow, Dumitrescu, and Heller \cite{BBDH:2021} studied hyperbolic conical metrics on compact Riemann surfaces of genus greater than one, with cone angles in $2\pi\mathbb{Z}_{>1}$, using stable ${\rm SL}(2, \mathbb{R})$-Higgs bundles. Biswas, Ar{\'e}s-Gastesi, and Govindarajan \cite{BGG:1997} employed parabolic Higgs bundles to construct complete hyperbolic metrics on punctured surfaces, where the punctures correspond to cusp singularities of the metrics. Kim and Wilkin \cite{KW:2018} investigated hyperbolic cone metrics with cone angles in $(0, 2\pi)$ on compact Riemann surfaces, also using stable parabolic Higgs bundles of rank two. Additional relevant studies include, but are not limited to, \cite{Baptista:2014}, \cite{BB:2013}, \cite{Mondello:2016}, and \cite{NS:1995}.

In this manuscript, inspired by the works of \cite{Hitchin:1987, BGG:1997, KW:2018, BBDH:2021}, we provide a more comprehensive generalization of Hitchin's results to include a broader class of singular hyperbolic metrics, incorporating both cone and cusp singularities. Specifically, we construct a stable parabolic Higgs bundle of rank two on a compact Riemann surface (Lemma \ref{lemma:construction of parabolic Higgs bundle}), solve Hitchin's equation on it, and obtain a conformal hyperbolic metric with prescribed singularities on the Riemann surface from the Hermitian-Einstein metric on the Higgs bundle (Subsection \ref{subsec:thm1.1}). Moreover, by deforming the Higgs field, we derive a family of parabolic Higgs bundles on a compact Riemann surface, parametrized by a nonempty open subset of a complex vector space (Theorem \ref{theorem:main theorem}). These bundles correspond to Riemannian hyperbolic metrics with the same singularity type as the original but are conformal on deformed Riemann surfaces. Additionally, we show that all Riemannian hyperbolic metrics with cone singularities of cone angles in $(0, \pi]$ can be recovered by deforming the Higgs field (Corollary \ref{corollary:isometric}).


\subsection{Background of singular hyperbolic metrics}
Let $\overline{X}$ be a compact Riemann surface with non-negative genus, and let
$\mathrm{\hat{D}} = \sum _{i=1}^{m}(\theta _{i} - 1)p_{i}$ be an ${\mathbb R}$-divisor
on $\overline{X}$ such that $0 \leq \theta _{i} \neq 1$, where
$\{p_{i}\}_{i=1}^{m}$ are $m$ distinct points on $\overline{X}$. We define $X := \overline{X} - \{p_{i}\}_{i=1}^{m}$, and let $j: X \to \overline{X}$ denote the inclusion map.
We say that $\mathrm{d} s^{2}$ is a {\it singular hyperbolic metric representing
$\mathrm{\hat{D}}$} if the following conditions hold:
\begin{itemize}
    \item $\mathrm{d} s^{2}$ is a conformal metric on $X$ with Gaussian curvature
    $-1$.
    \item For $\theta _{i} > 0$, $\mathrm{d} s^{2}$ has a {\it cone singularity
    at $p_{i}$ with a cone angle of $2\pi \theta _{i}$}. Specifically, in a neighborhood
    $U$ of $p_{i}$, $\mathrm{d} s^{2} = e^{2u} | \mathrm{d} z |^{2}$, where $z$ is a
    complex coordinate of $U$ with $z(p_{i}) = 0$, and $u - (\theta _{i} - 1) \ln |z|$ extends continuously to $z = 0$.
    \item For $\theta _{i} = 0$, $\mathrm{d} s^{2}$ has a {\it cusp singularity
    at $p_{i}$}. Specifically, in a neighborhood $V$ of $p_{i}$, $\mathrm{d} s^{2} = e^{2u} | \mathrm{d} z |^{2}$, where $z$ is a complex coordinate of $V$ with $z(p_{i}) = 0$, and $u + \ln |z| + \ln (-\ln |z|)$ extends continuously to $z = 0$.
\end{itemize}

Nitsche \cite{Nitsche:1957} and Heins \cite{Heins:1962} showed that an isolated singularity of a conformal hyperbolic metric can only be a cone singularity or a cusp singularity. Li and Mochizuki provided a Higgs bundle proof for this result \cite[Lemma 8.3]{LM:2020}. See also \cite{FSX:2020} for an elementary proof of this result using Complex Analysis.
This is a classical problem concerning the existence and uniqueness of a hyperbolic metric with finitely many prescribed singularities on a compact Riemann surface. According to the Gauss-Bonnet formula, if there is a hyperbolic
metric representing the divisor $\mathrm{\hat{D}} = \sum _{i=1}^{m}(\theta _{i} - 1)p_{i}$,
where $0 \leq \theta _{i} \neq 1$, on a compact Riemann surface $\overline{X}$, then the following inequality holds:
$$ \chi (\overline{X}) + \sum _{i=1}^{m} (\theta _{i} - 1) < 0, $$
where $\chi (\overline{X})$ is the Euler characteristic of $\overline{X}$. Over half a century ago, Heins \cite{Heins:1962} studied the properties of S-K metrics and used them to prove the following result:

\begin{theorem}{\rm (Heins \cite{Heins:1962})}
\label{thm:rat}
There exists a hyperbolic metric representing an $\mathbb{R}$-divisor
$\mathrm{\hat{D}} = \sum _{i=1}^{m} (\theta _{i} - 1) p_{i}$ with
$0 \leq \theta _{i} \neq 1$ on a compact Riemann surface $\overline{X}$ if and only
if $\chi (\overline{X}) + \sum _{i=1}^{m} (\theta _{i} - 1) < 0$. Moreover, such a metric is unique.
\end{theorem}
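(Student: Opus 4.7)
The plan is to reduce Theorem \ref{thm:rat} to the construction of a stable parabolic Higgs bundle of rank two on $\overline{X}$, apply the non-abelian Hodge correspondence in the parabolic setting (Simpson--Biquard--Mochizuki) to produce a harmonic metric solving Hitchin's equation, and then read off from this metric a conformal hyperbolic metric on $X$ whose singularities at each $p_i$ match those prescribed by $\hat{D}$.

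The necessity of $\chi(\overline{X})+\sum_{i=1}^m(\theta_i-1)<0$ is immediate from the Gauss--Bonnet formula applied to the singular hyperbolic metric, with the conical and cuspidal defects contributing $2\pi(\theta_i-1)$ and $-2\pi$ respectively. For sufficiency, following Hitchin's rank-two construction, I take the underlying holomorphic bundle to be $E=L\oplus L^{-1}$ for a line bundle $L$ related to a square root of $K_{\overline{X}}$ twisted by a divisor supported on the cusp points, equipped with the nilpotent off-diagonal Higgs field $\Phi$ whose $(2,1)$ entry is a distinguished meromorphic section; the parabolic weights at each $p_i$ are chosen as $\pm(1-\theta_i)/2$. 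The key calculation, to be carried out in Lemma \ref{lemma:construction of parabolic Higgs bundle}, is that the parabolic degree of $L$ equals $-\tfrac12\bigl(\chi(\overline{X})+\sum(\theta_i-1)\bigr)>0$, which is exactly the inequality needed to rule out $\Phi$-invariant destabilizing parabolic sub-line bundles.

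Given stability, the parabolic non-abelian Hodge correspondence supplies a unique harmonic metric $h$ on $(E_*,\Phi)$ adapted to the parabolic structure. Writing $h=\mathrm{diag}(h_1,h_1^{-1})$ in the splitting $E=L\oplus L^{-1}$, Hitchin's original computation carries over verbatim on $X$: the Hermitian metric on $TX$ induced by $h_1^{-1}$ together with the Higgs entry has Gaussian curvature identically $-1$. The prescribed parabolic weights dictate the leading growth of $h_1$ at each $p_i$, giving cone asymptotics of angle $2\pi\theta_i$ for $\theta_i>0$ and cusp asymptotics for $\theta_i=0$. Uniqueness is then the classical Ahlfors--Yau maximum principle argument: the difference $u=u_1-u_2$ of conformal potentials of two candidate metrics satisfies $\Delta u=e^{2u_2}(e^{2u}-1)$ with $u\to 0$ at each $p_i$, and the maximum principle forces $u\equiv 0$.

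The principal obstacle is the asymptotic analysis at the parabolic points: one must verify that the harmonic metric produced by the non-abelian Hodge correspondence has precisely the local model demanded by the definition of \emph{representing $\hat{D}$}, rather than only matching the parabolic weights up to bounded error. For cone points this means upgrading the general polynomial-growth estimates to a genuine $C^0$ expansion of $u-(\theta_i-1)\log|z|$ at $z=0$; for cusps ($\theta_i=0$), the required expansion $u+\log|z|+\log(-\log|z|)\to\mathrm{const.}$ sits at the boundary of the standard parabolic theory and will need a tailored local analysis, presumably by comparing $h$ to an explicit model solution on the punctured disk and controlling the error via weighted Sobolev estimates.
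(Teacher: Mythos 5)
Your outline coincides with the paper's strategy: the same rank-two construction (the paper takes $E_0=K^{\frac12}\oplus\bigl(K^{-\frac12}\otimes\mathcal{O}_{\overline{X}}(\tilde D-D)\bigr)$ with $\tilde D=\sum_i[\theta_i]p_i$, weights $\tfrac12(1\mp\{\theta_i\})$, and the nilpotent Higgs field with lower-left entry $\tfrac12\,1_{\tilde D}$), the same stability computation reducing exactly to $\chi(\overline{X})+\sum_i(\theta_i-1)<0$, Simpson's existence theorem for the Hermitian--Einstein metric, Hitchin's curvature computation, Gauss--Bonnet for necessity, and a maximum principle for uniqueness. One small correction: for $\theta_i>1$ the parabolic weights must be built from the fractional parts $\{\theta_i\}$, with the integer parts absorbed into the twist by $\tilde D$; your ``$\pm(1-\theta_i)/2$'' is only correct for $\theta_i\in[0,1)$.

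The step you leave open --- upgrading the weight-level growth of the harmonic metric to the exact local expansions demanded by the definition of representing $\hat D$ --- is precisely where the paper uses two devices you do not mention. First, the background metric $k$ is chosen so that near each $p_i$ it is an \emph{exact} local solution of Hitchin's equation (the explicit $\sinh(\theta_i\log r)$ model at cone points and the $r\log r$ model at cusps); Simpson's theorem then produces a Hermitian--Einstein metric mutually bounded with $k$, hence a hyperbolic metric $e^{2u}|{\rm d}z|^2$ on $X$ with $e^{2u}$ two-sidedly comparable to $r^{2(\theta_i-1)}$ (resp.\ $r^{-2}(\log r)^{-2}$). Second, the paper invokes the Nitsche--Heins classification (reproved in \cite{LM:2020} and \cite{FSX:2020}) that an isolated singularity of a conformal hyperbolic metric can \emph{only} be a cone or a cusp. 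Given this, the two-sided comparison with the model already forces the singularity at $p_i$ to be a cone of angle exactly $2\pi\theta_i$ (resp.\ a cusp), since any other cone angle, or a cusp where a cone is expected, would contradict the comparison; no weighted-Sobolev expansion is needed. Without this classification your argument genuinely stalls, because mutual boundedness alone does not yield the continuous extension of $u-(\theta_i-1)\log|z|$ required by the definition. Finally, in the uniqueness argument you should not assume the difference of potentials tends to $0$ at the $p_i$; it suffices that it extends continuously to $\overline{X}$, so that its positive part is subharmonic and continuous on the compact surface, hence constant, and the curvature equation forces that constant to vanish.
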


\subsection{Main results}

Our main results are divided into two parts. The first part revisits Heins' result (Theorem \ref{thm:rat}) from the perspective of stable parabolic Higgs bundles. To begin, we first establish the following key lemma. 

Let $\overline{X}$ be a compact Riemann surface, and let $X = \overline{X} - \{p_1, \dots, p_m\}$. Let $K$ denote the canonical line bundle of $\overline{X}$, and let $K^{\frac{1}{2}}$ be a line bundle such that $(K^{\frac{1}{2}})^2 = K$. Note that $K^{\frac{1}{2}}$ has $2^{2-\chi(\overline{X})}$ different choices.

\begin{lemma}
\label{lemma:construction of parabolic Higgs bundle}
Given that $0 \leq \theta_i \neq 1$ for each $p_i$, and assuming that $\chi(\overline{X}) + \sum_{i=1}^{m} (\theta_i - 1) < 0$, let $D = p_1 + \cdots + p_m$ denote the reduced divisor. Define $\tilde{D} = [\theta_1]p_1 + \cdots + [\theta_m]p_m$, where $[x]$ denotes the Gauss rounding function for a real number $x$, and $\{x\}:= x - [x]$. 

Next, we introduce a parabolic structure on the vector bundle 
\[
E_0 := K^{\frac{1}{2}} \oplus \left(K^{-\frac{1}{2}} \otimes \mathcal{O}_{\overline{X}}(\tilde{D} - D)\right)
\]
at each $p_i$ as follows:

\begin{itemize}
\item Case 1: $\{\theta_i\} \neq 0$
\begin{itemize}
    \item[(i)] A flag $(E_{0})_{p_i} \supset \big(K^{-\frac{1}{2}} \otimes \mathcal{O}_{\overline{X}}(\tilde{D} - D)\big)_{p_{i}} \supset 0$,
    \item[(ii)] Weights $0 < \alpha_1 < \alpha_2 < 1$, where $\alpha_1 = \frac{1}{2}(1 - \{\theta_i\})$ and $\alpha_2 = \frac{1}{2}(1 + \{\theta_i\})$.
\end{itemize}
\item Case 2: $\{\theta_i\} = 0$
\begin{itemize}
    \item[(i)] A trivial flag $(E_{0})_{p_i} \supset 0$,
    \item[(ii)] A weight of $\frac{1}{2}$ at $(E_{0})_{p_i}$.
\end{itemize}
\end{itemize}

We now define a parabolic Higgs field
\[
\phi = \frac{1}{2} \left( \begin{array}{cc}
0 & 0 \\
1_{\tilde{D}} & 0 \\
\end{array} \right) \in H^0\left({\rm End}(E_0) \otimes K \otimes \mathcal{O}_{\overline{X}}(D)\right),
\]
where $1_{\tilde{D}}$ denotes the natural section of $\mathcal{O}_{\overline{X}}(\tilde{D})$ corresponding to the constant function $1$ on $\overline{X}$, so that $\tilde{D}$ is the zero divisor of $1_{\tilde{D}}$. 

Then, $(E_0, \phi)$, endowed with the preceding parabolic structure, is a stable parabolic Higgs bundle with parabolic degree $\sum_{i=1}^{m} [\theta_i]$.
\end{lemma}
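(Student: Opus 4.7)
The argument splits into a parabolic-degree computation and a stability check, the latter reducing to classifying the $\phi$-invariant line subbundles of $E_0$.

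For the parabolic degree of $E_0$, I would combine $\deg(E_0) = (g-1) + (1-g) + \sum_i ([\theta_i] - 1) = \sum_i [\theta_i] - m$ with the observation that the weights at each puncture sum to $1$: in Case 1, $\alpha_1 + \alpha_2 = 1$; in Case 2 the single weight $\tfrac{1}{2}$ on the rank-$2$ fiber contributes $2 \cdot \tfrac{1}{2} = 1$. Summing over $i$ yields the parabolic degree $\sum_i [\theta_i]$.

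Stability for a rank-$2$ parabolic Higgs bundle requires the parabolic degree of every $\phi$-invariant parabolic line subbundle $L \subset E_0$ to be strictly less than $\tfrac{1}{2}\sum_i [\theta_i]$. Writing $L_1 := K^{\frac{1}{2}}$ and $L_2 := K^{-\frac{1}{2}} \otimes \mathcal{O}_{\overline{X}}(\tilde{D}-D)$, the Higgs field $\phi$ is strictly lower-triangular, so $\phi|_{L_2} = 0$ and $L_2$ is trivially $\phi$-invariant. For any other line subbundle $L$, the composition $L \hookrightarrow E_0 \twoheadrightarrow L_1$ is a non-zero map of line bundles on a curve, hence injective. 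In a local frame $(e_1, e_2)$ adapted to $E_0 = L_1 \oplus L_2$, a generator $\sigma = \sigma_1 e_1 + \sigma_2 e_2$ of $L$ then has $\sigma_1 \not\equiv 0$, and $\phi(\sigma) = \tfrac{1}{2}\,1_{\tilde{D}}\,\sigma_1\, e_2$ has vanishing $L_1$-component but non-vanishing $L_2$-component off the support of $\tilde{D}$. Since local sections of $L \otimes K \otimes \mathcal{O}_{\overline{X}}(D)$ have $L_1$- and $L_2$-components in the fixed ratio $\sigma_1 : \sigma_2$, demanding $\phi(\sigma) \in L \otimes K \otimes \mathcal{O}_{\overline{X}}(D)$ forces both components to vanish, a contradiction. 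Hence $L_2$ is the unique $\phi$-invariant line subbundle.

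It remains to compute the parabolic degree of $L_2$ with its induced parabolic structure. The induced weight at $p_i$ equals $(1+\{\theta_i\})/2$ in both cases (this is $\alpha_2$ in Case 1 and $\tfrac{1}{2}$ in Case 2), and a direct calculation yields the parabolic degree of $L_2$ as
\[
\tfrac{1}{2}\Bigl(\chi(\overline{X}) + \sum_i(\theta_i - 1) + \sum_i [\theta_i]\Bigr).
\]
The slope inequality against $\tfrac{1}{2}\sum_i [\theta_i]$ is therefore equivalent to the Gauss--Bonnet hypothesis $\chi(\overline{X}) + \sum_i(\theta_i - 1) < 0$, so stability follows. I expect the main obstacle to be the classification step above: one must rule out all line subbundles of the form $L_1(-D') \hookrightarrow E_0$, parametrized by effective divisors $D'$ together with a choice of lift into $L_2$, without enumerating them individually. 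The key mechanism is that $\phi$ ``points into $L_2$'' away from $\tilde{D}$, so any $L \neq L_2$ fails $\phi$-invariance at some generic point; once this is settled, the remaining degree arithmetic is routine bookkeeping.
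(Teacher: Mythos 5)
Your proposal is correct and follows essentially the same route as the paper: compute the parabolic degree from $\deg E_0=\sum_i[\theta_i]-m$ plus weight sum $m$, observe that $K^{-\frac{1}{2}}\otimes\mathcal{O}_{\overline{X}}(\tilde{D}-D)$ is the unique $\phi$-invariant line subbundle because $\phi$ maps everything into that factor away from $\tilde{D}$, and reduce the slope inequality to the Gauss--Bonnet hypothesis. Your justification of the uniqueness of the invariant subbundle is in fact spelled out in more detail than the paper's one-line version, and all the degree arithmetic checks out.
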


Using the stable parabolic Higgs bundle constructed in Lemma \ref{lemma:construction of parabolic Higgs bundle}, we conclude the first part of our main results by providing an alternative proof of Theorem \ref{thm:rat} in Subsection \ref{subsec:thm1.1}.

In the second part, we deform the Higgs field $\phi$ described above and derive a family of stable parabolic Higgs bundles on $\overline{X}$. These bundles are parameterized by a nonempty open subset of a complex vector space of meromorphic quadratic differentials on $\overline{X}$ with poles at most along a divisor. They correspond to singular hyperbolic metrics with the same type of singularities as the original metric. These metrics are defined on a family of deformed Riemann surfaces of $\overline{X}$. Before stating our main theorem (Theorem \ref{theorem:main theorem}), we first introduce some necessary notions.

First we recall from the proof of Theorem \ref{thm:rat} in Section \ref{section:Proofs of the key Lemma} that we obtain a singular hyperbolic metric $\widehat{{\rm d}s^{2}}$ representing the divisor $\hat{D} = \sum_{i=1}^{m}(\theta_i - 1)p_i$ from the stable parabolic Higgs bundle $(E_0, \phi)$. We denote the set $\{p_i\}_{i=1}^{m}$ as ${\rm supp\, D}$. Let ${\rm supp\, D_0} = \{p_i \in {\rm supp\, D} : \theta_i = 0\}$, which corresponds to the cusp singularities of $\widehat{{\rm d}s^{2}}$. 
Next, we express the set of cone points as a disjoint union:
\[
{\rm supp\, D} \setminus {\rm supp\, D_0} = {\rm supp\, D_1} \sqcup {\rm supp\, D_2} \sqcup {\rm supp\, D_3} \sqcup {\rm supp\, D_4},
\]
where
\[
{\rm supp\, D_1} = \Big\{ p_i \in {\rm supp\, D} \setminus {\rm supp\, D_0} : \{\theta_i\} = 0 \Big\},
\]
\[
{\rm supp\, D_2} = \Big\{ p_i \in {\rm supp\, D} \setminus {\rm supp\, D_0} : 0 < \{\theta_i\} < \frac{1}{2} \Big\},
\]
\[
{\rm supp\, D_3} = \Big\{ p_i \in {\rm supp\, D} \setminus {\rm supp\, D_0} : \{\theta_i\} = \frac{1}{2} \Big\},
\]
and
\[
{\rm supp\, D_4} = \Big\{ p_i \in {\rm supp\, D} \setminus {\rm supp\, D_0} : \frac{1}{2} < \{\theta_i\} < 1 \Big\}.
\]
Let $D_1 = p_{i_1} + \cdots + p_{i_{D_1}}$ denote the reduced divisor corresponding to ${\rm supp\, D_1}$. Similarly, we can define the reduced divisors $D_2$, $D_3$, and $D_4$. Let $1_D$ denote the natural section of $\mathcal{O}_{\overline{X}}(D)$ given by the constant function $1$, so that the zero divisor of $1_D$ is $D$.

We now state our main theorem in the second part.

\begin{theorem}
\label{theorem:main theorem}
For any $a \in H^{0}\left( \overline{X}, K^{2} \otimes \mathcal{O}_{\overline{X}}(D - \tilde{D}) \right)$, the Higgs field
\[
\Phi_{a} := \left( \begin{array}{cc}
0 & a \otimes 1_D \\
\frac{1}{2} 1_{\tilde{D}} & 0 \\
\end{array} \right),
\]
on the parabolic bundle $E_0$ (defined in Lemma \ref{lemma:construction of parabolic Higgs bundle}) makes $(E_0, \Phi_a)$ a stable parabolic Higgs bundle with parabolic degree $\sum_{i=1}^{m} [\theta_i]$.

Let $H_a$ denote the Hermitian-Einstein metric on the restriction of $E_0$ to $X$, and let $h_a$ denote the Kähler metric on $X$ induced by $H_a$. Then the following results hold:

1. There exists a unique largest connected subset
\[
\mathcal{U} \subset H^{0}\left( \overline{X}, K^{2} \otimes \mathcal{O}_{\overline{X}}(D - 2\tilde{D} + D_1 - D_4) \right)
\]
containing $0$ such that for all $a \in \mathcal{U}$, the section of the second symmetric power of the complexified cotangent bundle
\[
\hat{h}_a := 2a + h_a + 2\bar{a} + \frac{4a \bar{a}}{h_a} \in \Omega^{0}\left( X, \mathrm{Sym}^{2}(T^{*}_{\mathbb{R}}X \otimes \mathbb{C}) \right)
\]
is a Riemannian metric on $X$. The subset $\mathcal{U}$ is either $\{0\}$ or a nonempty open subset of 
\[
H^{0}\left( \overline{X}, K^{2} \otimes \mathcal{O}_{\overline{X}}(D - 2\tilde{D} + D_1 - D_4) \right).
\] 
In addition, it satisfies
\[
H^{0}\left( \overline{X}, K^{2} \otimes \mathcal{O}_{\overline{X}}(D - 2\tilde{D} - D_3 - D_4) \right) \subset \mathcal{U} \subset H^{0}\left( \overline{X}, K^{2} \otimes \mathcal{O}_{\overline{X}}(D - 2\tilde{D} + D_1 - D_4) \right),
\]
and the corresponding metric at parameter $a=0$ lies  in the conformal class of $X$.

2. The metric $\hat{h}_a$ is a Riemannian metric of constant Gaussian curvature $-1$. Moreover, $\hat{h}_a$ has a cone singularity at $p_i$ with cone angle $2\pi \theta_i$ if $\theta_i \neq 0$, and a cusp singularity at $p_i$ if $\theta_i = 0$. In other words, $\hat{h}_a$ and $h_0$ have the same singularity type.

\end{theorem}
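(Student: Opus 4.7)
The plan is to establish the theorem in four stages: stability of $(E_0,\Phi_a)$; construction of the Hermitian--Einstein metric $H_a$ together with the Riemannian criterion for $\hat h_a$; the description of $\mathcal U$ via local asymptotics; and finally the curvature and singularity analysis. For stability, the parabolic bundle and its parabolic degree $\sum_i [\theta_i]$ are unchanged from Lemma~\ref{lemma:construction of parabolic Higgs bundle}, so only the Higgs-preservation condition needs re-examination. A saturated sub-line bundle $L \subset E_0$ is either the sub-summand $K^{-1/2}\otimes\mathcal{O}_{\overline{X}}(\tilde{D} - D)$ or of the form $L \cong K^{1/2}\otimes\mathcal{O}_{\overline{X}}(-E')$ embedded via some pair $(f,g)$. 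In the former case $\Phi_a(L)$ lies in the complementary $K^{1/2}$-direction, so $L$ is $\Phi_a$-invariant only when $a = 0$, for which Lemma~\ref{lemma:construction of parabolic Higgs bundle} already supplies the slope bound using the Gauss--Bonnet hypothesis. In the latter case $\Phi_a$-invariance reduces to the algebraic identity $2(a\otimes 1_D)\,g^2 = 1_{\tilde{D}}\,f^2$ as sections of $\mathcal{O}_{\overline{X}}(\tilde{D} + 2E')$; a vanishing-order bookkeeping at each $p_i$ then forces the parabolic slope of such an $L$ to lie strictly below $\tfrac12\sum_i[\theta_i]$.

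Once stability is in hand, the non-abelian Hodge correspondence for stable parabolic Higgs bundles (Simpson, Biquard, Mochizuki) produces the unique Hermitian--Einstein metric $H_a$ on $E_0|_X$ compatible with the parabolic structure, and $h_a$ is the induced K\"ahler metric on $X$ read off from $H_a$ on the $K^{1/2}$-summand. In a local coordinate $z$ with $h_a = e^{2u}\,dz\,d\bar z$ and $a = a(z)\,dz^2$, writing $\hat h_a$ as a symmetric matrix in real coordinates $(x,y)$ yields
\[
\det(\hat h_a) = \bigl(e^{2u} - 4|a(z)|^2 e^{-2u}\bigr)^2,
\]
with diagonal entries strictly positive once $h_a^2 > 4|a|^2$ pointwise; consequently $\hat h_a$ is a Riemannian metric on $X$ if and only if $4|a|^2 < h_a^2$ everywhere on $X$. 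Let $\mathcal V$ denote this locus inside $H^0\bigl(\overline{X}, K^2 \otimes \mathcal{O}_{\overline{X}}(D - \tilde{D})\bigr)$: it contains $0$ since $h_0 > 0$, and is open by continuous dependence of $H_a$ on $a$ (standard regularity for the Hitchin equation). Take $\mathcal U$ to be the connected component of $0$ in $\mathcal V$; the dichotomy $\mathcal U = \{0\}$ versus $\mathcal U$ a nonempty open subset then follows automatically from openness of $\mathcal V$ in a finite-dimensional vector space.

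The divisor bounds on $\mathcal U$ come from reading off the local asymptotics of $h_a$ from the parabolic weights. Near $p_i$ with $\theta_i > 0$ one has $h_a \sim C|z|^{2(\theta_i - 1)}$ with $C > 0$ bounded, while near a cusp ($\theta_i = 0$) the Poincar\'e model $h_a \sim |z|^{-2}(\ln|z|)^{-2}$ applies. If $\mathrm{ord}_{p_i}(a) = n_i$, then $|a|^2/h_a^2 \sim |z|^{2n_i - 4(\theta_i - 1)}$ up to bounded (possibly logarithmic) factors. The condition $|a|^2/h_a^2 < 1/4$ near $p_i$ translates into $n_i \geq \lceil 2(\theta_i - 1)\rceil$, with equality permissible only when $2(\theta_i - 1) \in \mathbb{Z}$ and an additional leading-coefficient condition is satisfied; case-by-case assembly according to the partition $D = D_0 + D_1 + D_2 + D_3 + D_4$ yields exactly $\mathcal U \subset H^0\bigl(\overline{X}, K^2 \otimes \mathcal{O}_{\overline{X}}(D - 2\tilde{D} + D_1 - D_4)\bigr)$. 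Conversely, any $a \in H^0\bigl(\overline{X}, K^2 \otimes \mathcal{O}_{\overline{X}}(D - 2\tilde{D} - D_3 - D_4)\bigr)$ has $n_i$ strictly exceeding $2(\theta_i - 1)$ at every puncture, so $|a|^2/h_a^2 \to 0$ at each $p_i$; combined with continuity along the linear ray $t \mapsto ta$ and openness of $\mathcal V$, this places the whole smaller subspace inside $\mathcal U$.

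Finally, the Hitchin equation satisfied by $H_a$ reduces, in local coordinates on $X$, to a Liouville-type PDE in $u = \tfrac12\log h_a$ that is exactly the hyperbolicity equation from Section~7 of~\cite{Hitchin:1987}. Hitchin's calculation there carries over to $X$ with only cosmetic changes to show that $\hat h_a$ has Gaussian curvature $-1$ wherever it is Riemannian, i.e.\ throughout $X$ when $a \in \mathcal U$. The singularity type of $\hat h_a$ at each $p_i$ transfers from that of $h_a$, because inside $\mathcal U$ the correction $4a\bar a/h_a$ is strictly dominated by $h_a$ near each $p_i$, preserving the leading asymptotics (cone angle $2\pi\theta_i$ when $\theta_i \neq 0$; cusp when $\theta_i = 0$). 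The main obstacle I expect is the global step in the inclusion $H^0\bigl(\overline{X}, K^2 \otimes \mathcal{O}_{\overline{X}}(D - 2\tilde{D} - D_3 - D_4)\bigr) \subset \mathcal U$: local decay of $|a|^2/h_a^2$ at each puncture does not by itself imply the strict global inequality $4|a|^2 < h_a^2$ on $X$, so either a maximum principle applied to a suitable subsolution of the Hitchin equation, or the continuity-in-$t$ strategy sketched above combined with a careful analysis of how the inequality could first fail, will be needed to close this step.
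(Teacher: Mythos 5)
Your overall architecture matches the paper's: the nondegeneracy criterion $4|a|^2<h_a^2$ (equivalently $\|\rho(a)\|_{H_a,h_a}<\tfrac12$), the puncture asymptotics that produce the two divisors $D-2\tilde D+D_1-D_4$ and $D-2\tilde D-D_3-D_4$, Hitchin's curvature computation, and the cone-or-cusp classification of isolated singularities. But two steps have genuine gaps. First, the inclusion $H^{0}\big(\overline{X},K^{2}\otimes\mathcal{O}_{\overline{X}}(D-2\tilde{D}-D_{3}-D_{4})\big)\subset\mathcal U$, which you correctly flag as the main obstacle, is not closed by either of your suggested devices as stated: along the ray $t\mapsto ta$ the inequality would first fail at an interior point of $X$ (the puncture decay rules out failure at the $p_i$), and you still need a reason why interior equality $\|\rho(ta)\|=\tfrac12$ is impossible. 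The paper supplies exactly this: from the self-duality equation it derives the Bochner identity $\Delta\|\rho(a)\|^{2}=2(1-4\|\rho(a)\|^{2})\|\rho(a)\|^{2}-2\|{\rm d}'\rho(a)\|^{2}$ and applies the strong maximum principle to $\tfrac14-\|\rho(a)\|^{2}$ at the interior infimum, which exists precisely because $\|\rho(a)\|\to 0$ at the punctures for $a$ in the smaller space. Second, your part 2 claim that the singularity type transfers because ``the correction $4a\bar a/h_a$ is strictly dominated by $h_a$'' is not correct: the off-diagonal terms $2a\,{\rm d}z^{2}$ have norm $2\|\rho(a)\|\,h_a$, comparable to (not dominated by) $h_a$, and at points of $D_1$ and $D_3$ even $4a\bar a/h_a$ is merely bounded relative to $h_a$. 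So $\hat h_a$ is not a small perturbation of $h_a$; it lies in a different conformal class. The paper instead factors $\hat h_a=g_a\big({\rm d}z+\beta\,{\rm d}\bar z\big)\big(\overline{{\rm d}z+\beta\,{\rm d}\bar z}\big)$ with Beltrami coefficient satisfying $\|\beta\|_{\infty}<1$, solves the Beltrami equation to obtain a new Riemann surface $\bar\Sigma$ whose coordinate $w$ near $p_i$ satisfies $|w|\sim|z|$ with $|w_z|$ bounded above and below, and only then reads off the cone angle or cusp from $g_a=\rho(z)|w_z|^{2}\sim|z|^{2\theta_i-2}$ combined with the classification of isolated singularities of conformal hyperbolic metrics on $\bar\Sigma$.

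On stability you take a genuinely different route: a direct classification of $\Phi_a$-invariant line subbundles via the relation $2(a\otimes 1_D)g^{2}=1_{\tilde D}f^{2}$. The slope bound for such subbundles is only asserted; the crude estimate (degree at most $\deg K^{1/2}$ plus maximal weights $\tfrac12(1+\{\theta_i\})$) is weaker than the Gauss--Bonnet hypothesis by $2\sum_i\{\theta_i\}$, so the entire content lies in the unperformed interaction between the vanishing orders of $f,g$ and the induced parabolic weights. The paper sidesteps all of this with the rescaling trick $\tfrac{1}{\epsilon}T_\epsilon^{-1}\circ\Phi_a\circ T_\epsilon=\Phi_{a/\epsilon^{2}}$ for $T_\epsilon={\rm diag}(\epsilon,1)$, reducing stability of $(E_0,\Phi_a)$ for every $a$ to openness of the stability condition near the case $a=0$ already settled in Lemma \ref{lemma:construction of parabolic Higgs bundle}. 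Finally, your asymptotic $h_a\sim C|z|^{2(\theta_i-1)}$ presupposes that $H_a$ and $H_0$ are mutually bounded; the paper justifies this by checking ${\rm residue}(\Phi_a)={\rm residue}(\phi)$ and invoking Biquard and Simpson, a step you should make explicit.
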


\begin{remark}
Assume that all cone angles $2\pi \theta_i \in 2\pi \mathbb{Z}_{>1}$, i.e. $D_0 = D_2 = D_3 = D_4 = 0$. Then Theorem \ref{theorem:main theorem} recovers the main result of \cite{BBDH:2021}.
\end{remark}

From the proof of Theorem \ref{theorem:main theorem}, we obtain the following corollary:

\begin{corollary}
\label{corollary:restricted angles}
If all $\theta_i \in [0, \frac{1}{2})$, then $D_1 = D_3 = D_4 = 0$. Thus, for any $a \in H^{0}\left( \overline{X}, K^{2} \otimes \mathcal{O}_{\overline{X}}(D) \right)$, $\hat{h}_a$ gives a Riemannian metric of constant Gaussian curvature $-1$.
\end{corollary}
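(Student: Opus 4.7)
The corollary is essentially a bookkeeping consequence of Theorem~\ref{theorem:main theorem}: under the hypothesis $\theta_i \in [0,\tfrac{1}{2})$ for all $i$, the exceptional divisors $D_1, D_3, D_4$ all vanish, which forces the two endpoints of the sandwich inclusion for $\mathcal{U}$ to coincide.

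First I would verify the vanishing of the divisors. Since $0 \leq \theta_i < 1$, we have $[\theta_i] = 0$ and $\{\theta_i\} = \theta_i$; in particular $\tilde{D} = \sum_i [\theta_i]\, p_i = 0$. A point $p_i$ contributes to $D_1$ only if $p_i \notin \mathrm{supp}\, D_0$ (so $\theta_i \neq 0$) and $\{\theta_i\} = 0$; but for $\theta_i \in (0,\tfrac{1}{2})$ the fractional part $\{\theta_i\} = \theta_i$ is strictly positive, so no such $p_i$ exists and $D_1 = 0$. The conditions $\{\theta_i\} = \tfrac12$ and $\tfrac12 < \{\theta_i\} < 1$ defining $D_3$ and $D_4$ are likewise ruled out by $\{\theta_i\} < \tfrac12$, so $D_3 = D_4 = 0$.

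Next I would plug these vanishings into the chain
\[
H^{0}\!\left(\overline{X}, K^{2} \otimes \mathcal{O}_{\overline{X}}(D - 2\tilde{D} - D_3 - D_4)\right) \subset \mathcal{U} \subset H^{0}\!\left(\overline{X}, K^{2} \otimes \mathcal{O}_{\overline{X}}(D - 2\tilde{D} + D_1 - D_4)\right)
\]
provided by part~1 of Theorem~\ref{theorem:main theorem}. With $\tilde{D} = D_1 = D_3 = D_4 = 0$ both outer terms collapse to $H^{0}(\overline{X}, K^{2} \otimes \mathcal{O}_{\overline{X}}(D))$, forcing
\[
\mathcal{U} = H^{0}\!\left(\overline{X}, K^{2} \otimes \mathcal{O}_{\overline{X}}(D)\right).
\]
Therefore every $a$ in this cohomology group lies in $\mathcal{U}$, and part~2 of Theorem~\ref{theorem:main theorem} immediately gives that $\hat{h}_a$ is a genuine Riemannian metric of constant Gaussian curvature $-1$.

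There is no real obstacle; the only point requiring care is to remember that the decomposition $\mathrm{supp}\, D \setminus \mathrm{supp}\, D_0 = \mathrm{supp}\, D_1 \sqcup \cdots \sqcup \mathrm{supp}\, D_4$ excludes the cusp locus from $D_1$, so that $\theta_i = 0$ is genuinely allowed without producing a contribution to $D_1$. Once this is observed, the corollary reduces to a one-line substitution into Theorem~\ref{theorem:main theorem}.
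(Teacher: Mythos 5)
Your proof is correct, and it reaches the conclusion by a route that differs from the paper's in an instructive way. You treat the corollary as a purely formal consequence of the \emph{statement} of Theorem~\ref{theorem:main theorem}: since $\theta_i\in[0,\tfrac12)$ forces $[\theta_i]=0$ and $\{\theta_i\}=\theta_i<\tfrac12$, you get $\tilde D=D_1=D_3=D_4=0$, the two ends of the sandwich
\[
H^{0}\bigl(\overline{X}, K^{2}\otimes\mathcal{O}_{\overline{X}}(D-2\tilde D-D_3-D_4)\bigr)\subset\mathcal{U}\subset H^{0}\bigl(\overline{X}, K^{2}\otimes\mathcal{O}_{\overline{X}}(D-2\tilde D+D_1-D_4)\bigr)
\]
collapse to $H^{0}(\overline{X},K^{2}\otimes\mathcal{O}_{\overline{X}}(D))$, and $\mathcal{U}$ is squeezed to equal that whole space; part~2 of the theorem then gives curvature $-1$. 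The paper instead goes back inside the \emph{proof} of the theorem: it uses the local norm estimate (the displayed formula for $\|\rho(a)\|_{H_0,h_0}$ near $p_i$, whose exponent $1-[\theta_i]-2\{\theta_i\}=1-2\theta_i$ is positive here) to see that $\|\rho(a)\|$ decays at every puncture, and then reapplies the strong maximum principle to get $\|\rho(a)\|<\tfrac12$ on all of $X$. The two arguments are mathematically equivalent, since the lower inclusion in the sandwich is itself established by exactly that maximum-principle argument; your version is shorter and makes the logical dependence on the theorem cleaner (and explicitly yields $\mathcal{U}=H^{0}(\overline{X},K^{2}\otimes\mathcal{O}_{\overline{X}}(D))$ as a byproduct), while the paper's version is self-contained at the level of the analytic estimates and does not rely on the sandwich having been recorded in the theorem statement. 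Your closing remark about the cusp locus being carried by $D_0$ rather than $D_1$ is the right point of care, and your divisor bookkeeping is accurate.
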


Inspired by Theorem (11.2)(iii) by Hitchin \cite{Hitchin:1987}, we can prove the following:

\begin{corollary}
\label{corollary:isometric}
Assume that $2\pi \theta_i \in (0, \pi]$ for all $1\leq i\leq m$. Then every Riemannian hyperbolic metric on the underlying topological surface of $\overline{X}$ with $m$ cone angles $\{2\pi\theta_i\}_{i=1}^m$  is isometric to a metric of the form $\hat{h}_a$ for some $a \in \mathcal{U}$ in Theorem \ref{theorem:main theorem}. In particular, if  $2\pi \theta_i \in (0, \pi)$ for all $1\leq i\leq m$, 
then we have $\mathcal{U}=H^{0}\left( \overline{X}, K^{2} \otimes \mathcal{O}_{\overline{X}}(D) \right)$.
\end{corollary}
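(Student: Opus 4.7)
The plan is to combine Heins' uniqueness (Theorem \ref{thm:rat}) with a Hitchin-style surjectivity argument. Given any Riemannian hyperbolic metric $g$ on the underlying topological surface $\Sigma$ with the prescribed cone angles $2\pi\theta_i \in (0,\pi]$, the conformal class of $g$ determines a new marked Riemann surface $\overline{Y}$, and Theorem \ref{thm:rat} asserts that $g$ is the unique conformal hyperbolic metric representing the divisor $\sum_{i=1}^{m}(\theta_i-1)q_i$ on $\overline{Y}$. Hence producing an $a\in\mathcal{U}$ whose associated complex structure is $\overline{Y}$ (as a marked Riemann surface) forces $g$ and $\hat h_a$ to be isometric, reducing the corollary to showing that the map $\Psi:\mathcal{U}\to\mathcal{T}_{g,m}$, $a\mapsto[\hat h_a]$, into the Teichm\"uller space of $(\overline{X},\{p_i\}_{i=1}^m)$ is surjective.

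For the dimension count, the hypothesis $\theta_i\in(0,1/2]$ gives $\tilde D=D_0=D_1=D_4=0$, so the ambient parameter space in Theorem \ref{theorem:main theorem} becomes $H^0(\overline{X}, K^{2} \otimes \mathcal{O}_{\overline{X}}(D))$. Riemann--Roch together with the Gauss--Bonnet inequality ensures $\deg(K^{2} \otimes \mathcal{O}_{\overline{X}}(D))=4g-4+m>2g-2$, giving $\dim_{\mathbb{C}}H^0(\overline{X}, K^{2} \otimes \mathcal{O}_{\overline{X}}(D))=3g-3+m$, which matches $\dim_{\mathbb{C}}\mathcal{T}_{g,m}$. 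Decomposing the explicit formula $\hat h_a = 2a + h_a + 2\bar a + 4a\bar a/h_a$ into $(2,0)\oplus(1,1)\oplus(0,2)$-parts identifies the Beltrami coefficient of the complex structure underlying $\hat h_a$ as $\mu_a = 2\bar a/h_a$ on the regular locus $X$, and the singular behaviour of $h_a$ at the cone points ensures the complex structure extends across $\mathrm{supp}\,D$.

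To prove $\Psi$ is surjective I would follow the template of Hitchin's Theorem (11.2)(iii). At $a=0$, since $\bar a$ vanishes, $d\Psi|_{a=0}(\dot a)=[2\overline{\dot a}/h_0]\in H^1(\overline{X}, T\overline{X}\otimes\mathcal{O}_{\overline{X}}(-D))$, and Serre duality identifies the resulting linear functional on $H^0(\overline{X}, K^{2}\otimes\mathcal{O}_{\overline{X}}(D))$ with $q\mapsto \int_{\overline{X}}\overline{\dot a}\,q/h_0$. The assumption $\theta_i\le 1/2$ makes this $L^2$-type integral convergent for every $q$, and positivity of $h_0$ makes the pairing nondegenerate, so $d\Psi|_0$ is an $\mathbb{R}$-linear isomorphism; an analogous argument using $h_a$ shows that $\Psi$ is a local diffeomorphism at every $a\in\mathcal{U}$. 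Then I would establish properness: as $a\to\partial\mathcal{U}$ the positivity of $\hat h_a$ degenerates, forcing $\|\mu_a\|_{L^\infty}\to 1$ and hence $[\mu_a]$ to leave every compact subset of $\mathcal{T}_{g,m}$. Since $\mathcal{T}_{g,m}$ is contractible and $\mathcal{U}$ is connected, a proper local diffeomorphism between them is a diffeomorphism, yielding surjectivity.

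For the ``in particular'' claim, when $2\pi\theta_i\in(0,\pi)$ one additionally has $D_3=0$, so the nested inclusions in Theorem \ref{theorem:main theorem} collapse to
\[
H^0\left(\overline{X}, K^{2} \otimes \mathcal{O}_{\overline{X}}(D)\right) \subset \mathcal{U} \subset H^0\left(\overline{X}, K^{2} \otimes \mathcal{O}_{\overline{X}}(D)\right),
\]
forcing $\mathcal{U}=H^0\left(\overline{X}, K^{2} \otimes \mathcal{O}_{\overline{X}}(D)\right)$. The main obstacle is expected to be the properness step for cone angles exactly equal to $\pi$ (points of $\mathrm{supp}\,D_3$), where the parabolic weights $\alpha_1=\alpha_2=\tfrac12$ give borderline estimates on $h_a$ and $\hat h_a$ sits right at the boundary of the positivity regime; a delicate local analysis near these points is needed to trade off the singular behaviour of the Hermitian--Einstein metric against the rate at which $\hat h_a$ loses positive-definiteness as $a$ approaches $\partial\mathcal{U}$.
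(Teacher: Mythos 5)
Your overall strategy --- reduce to surjectivity of $a\mapsto[\hat h_a]$ onto the Teichm\"uller space of the marked surface and then invoke uniqueness from Theorem \ref{thm:rat} --- is coherent, and your ``in particular'' argument (the sandwich in Theorem \ref{theorem:main theorem} collapses because $\tilde D=D_1=D_3=D_4=0$ when all $\theta_i\in(0,\tfrac12)$) is exactly right. But the surjectivity argument has a genuine gap at the properness step. You claim that as $a\to\partial\mathcal U$ the degeneration of $\hat h_a$ forces $\|\mu_a\|_{L^\infty}\to 1$ and ``hence $[\mu_a]$ leaves every compact subset of $\mathcal T_{g,m}$.'' That implication is false: the Teichm\"uller distance from the basepoint is controlled by the dilatation of the \emph{extremal} representative of the class $[\mu_a]$, and a sequence of Beltrami differentials with sup-norm tending to $1$ can remain in a compact set of Teichm\"uller space (e.g.\ if the region where $|\mu_a|$ is close to $1$ shrinks). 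Worse, in precisely the case the corollary addresses ($2\pi\theta_i\in(0,\pi)$), $\mathcal U$ is the entire vector space, so ``$a\to\partial\mathcal U$'' means $\|a\|\to\infty$, and you give no estimate at all there; establishing properness of the Hopf-differential parametrization as $\|a\|\to\infty$ is the hard analytic core of Wolf-type theorems and requires genuine energy/area estimates, here complicated further by the cone singularities. The subsidiary claims that $d\Psi$ is an isomorphism at every $a\neq 0$ (where the variation of $h_a$ through the Hermitian--Einstein equation enters, not just $2\overline{\dot a}/h_a$) and that $a\mapsto h_a$ is differentiable in this parabolic setting are also asserted rather than proved.

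The paper avoids all of this by constructing the preimage directly, following Hitchin's actual proof of Theorem (11.2)(iii) in \cite{Hitchin:1987} rather than a degree-theory template: given the hyperbolic cone metric $\widetilde{{\rm d}s^2}$, it invokes Gell-Redman's theorem \cite{Redman:2015} (which is where the hypothesis $2\pi\theta_i\in(0,\pi]$ is genuinely used) to obtain a harmonic diffeomorphism $u\colon X\to(M,\widetilde{{\rm d}s^2})$ isotopic to the identity, takes $a$ to be the Hopf differential of $u$ (holomorphic on $X$ with at most simple poles at the $p_i$), recovers $h_a=\tfrac12\bigl(b+\sqrt{b^2-4a\bar a}\bigr)$ from the $(1,1)$-part $b$ of $u^*\widetilde{{\rm d}s^2}$, and reverses the curvature computation to see that this data solves the self-duality equation, so that $u^*\widetilde{{\rm d}s^2}=\hat h_a$. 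If you want to salvage your approach you would need to replace the $\|\mu_a\|_\infty\to 1$ step with an actual properness proof (in the spirit of Wolf's energy estimates, adapted to cone points), which is substantially more work than the harmonic-map route.
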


\subsection{Outline}
Section 2 reviews concepts and background related to filtered regular Higgs bundles on punctured Riemann surfaces, which will be used in Section \ref{section:Proofs of the key Lemma}. In Section \ref{section:Proofs of the key Lemma}, we provide the proof of Lemma \ref{lemma:construction of parabolic Higgs bundle}. Using the parabolic Higgs bundle constructed in this lemma, we present a parabolic-Higgs-bundle proof of Theorem \ref{thm:rat}. Finally, in Section \ref{section-universal}, we prove Theorem \ref{theorem:main theorem} and the corollaries \ref{corollary:restricted angles} and \ref{corollary:isometric} by deforming the Higgs field.

\section{Preliminaries}
\label{section:filtered regular Higgs bunlde}
\quad In this section, we review the concepts related to filtered regular Higgs bundles, using the notations introduced in the introduction.

\begin{definition}{\rm (\cite[pp. 716-717]{Simpson:1990})}
\label{definition:filtered vector bundle}
{\rm
Let $\overline{X}$ be a compact Riemann surface, and let $\{p_i\}_{i=1}^m$ denote a set of $m$ distinct points on $\overline{X}$. A {\it filtered vector bundle} on $X = \overline{X} \backslash \{p_1, \dots, p_m\}$ is an algebraic vector bundle $E \rightarrow X$, together with a one-parameter family of algebraic vector bundles $E_{\alpha} \rightarrow \overline{X}$ indexed by $\alpha \in \mathbb{R}$, such that for each $\alpha$, we have $E = i^{*} E_{\alpha}$, and the following conditions hold:

\begin{itemize}
\item $\{E_{\alpha}\}_{\alpha \in \mathbb{R}}$ is a decreasing family, meaning that $E_{\alpha}$ is a subsheaf of $E_{\beta}$ whenever $\alpha \geq \beta$.
\item $\{E_{\alpha}\}_{\alpha \in \mathbb{R}}$ is left continuous, i.e., for each $\alpha$, there exists $\epsilon' > 0$ such that $E_{\alpha - \epsilon} = E_{\alpha}$ for all $0 \leq \epsilon \leq \epsilon'$.
\item $E_{\alpha + 1} = E_{\alpha} \otimes \mathcal{O}_{\overline{X}}(-D)$ for all $\alpha$.
\end{itemize}
A {\it filtered regular Higgs bundle} $(E, \phi, \{E_{\alpha}\})$ on $X$ consists of a filtered vector bundle $(E, \{E_{\alpha}\})$ along with a section $\phi \in H^{0}\left({\rm End}(E_0) \otimes K \otimes \mathcal{O}_{\overline{X}}(D)\right)$, which preserves the subsheaf $E_{\alpha} \subset E_0$ for every $\alpha \in (0,1]$.
This $\phi$ is called the {\it Higgs field} of the filtered regular Higgs bundle $(E, \phi, \{E_{\alpha}\})$ on $X$.
}
\end{definition}

\begin{definition}
\label{definition:algebraic degree}
{\rm
Given a filtered vector bundle $\left(E, \{E_{\alpha}\}_{\alpha \in \mathbb{R}}\right)$ on $X$, let $E_{p_i,0}$ denote the fiber of $E_0 \rightarrow \overline{X}$ at the point $p_i \in \overline{X}$, and let ${\rm Gr}_{\alpha}(E_{p_i,0})$ be the direct limit of the system $\{E_{p_i, \alpha}/E_{p_i, \beta}\}$ as $\beta \to \alpha^+$. The {\it parabolic degree} of the filtered vector bundle $\{E_{\alpha}\}_{\alpha \in \mathbb{R}}$ is defined as
\[
\deg\left(E, \{E_{\alpha}\}\right) := \deg(E_0) + \sum_{i=1}^{m} \sum_{0 \leq \alpha < 1} \alpha \, {\rm dim}_{\mathbb{C}}\left({\rm Gr}_{\alpha}(E_{p_i,0})\right).
\]
The filtered regular Higgs bundle $(E, \phi, \{E_{\alpha}\})$ is said to be {\it parabolic Higgs stable} (resp. {\it parabolic Higgs semistable}) if for every subbundle $F \subset E$, endowed with the induced filtration $\{F_{\alpha}\}$ and preserved by $\phi$, the inequality
\[
{\rm slope}\left(F, \{F_{\alpha}\}\right):=\frac{\deg\left(F, \{F_{\alpha}\}\right)}{{\rm rk}\,F} < 
(\leq) {\rm slope}\left(E, \{E_{\alpha}\}\right)=\frac{\deg\left(E, \{E_{\alpha}\}\right)}{{\rm rk}\,E}
\]
holds. In this context, $\big(F,\, \phi, \, \{F_{\alpha}\}\big)$ is called a {\it filtered Higgs subbundle} of $(E, \phi, \{E_{\alpha}\})$. 
}
\end{definition}

We now choose a smooth conformal metric $\mathrm{d}s_0^2$ on $\overline{X}$ such that $\mathrm{d}s_0^2$ is Euclidean in some neighborhood of each point $p_i$,  where $\omega$ is the K\"{a}hler form.

\begin{definition}
\label{definition:acceptable metric}
{\rm
Let $E \rightarrow X$ be an algebraic vector bundle equipped with a smooth Hermitian metric $k$, and let $F_k$ denote the curvature of the Chern connection of $k$. The metric $k$ on $E$ is said to be {\it acceptable} if there exists a bound on $F_k$ near each point $p \in \{p_i\}_{i=1}^m$, of the form
\[
|F_k|_{(k, \, \mathrm{d}s_0^2)} \leq f + \frac{C}{r^2 (\log r)^2}
\]
for some $f \in L^q$ with $q > 1$ and some constant $C > 0$, where $r$ is the distance from $p$ with respect to $\mathrm{d}s_0^2$.
}
\end{definition}

The following theorem due to Simpson is crucial for us to prove Theorems \ref{thm:rat} and \ref{theorem:main theorem}, see also \cite[Theorem 12]{BS:2012}.

\begin{theorem}{\rm (\cite[Theorem 6]{Simpson:1990})}
\label{theorem:Hermitian-Einstein metric}
Consider a filtered regular Higgs bundle $\left(E, \phi, \{E_{\alpha}\}\right)$ on $X$, which is parabolic Higgs stable. 
Given an acceptable Hermitian metric $k$ on $E$, there exists a Hermitian metric $h$ that satisfies Hitchin's self-dual equation on $X$:
\[
F_{h}+[\phi,\phi^{*h}]=-2\pi {\rm i}\cdot \frac{\text{slope}(E,\{E_{\alpha}\})}{{\text{vol}}(X)}\ {\rm Id}_{E}\otimes \omega,
\]
where $F_{h}$ is the curvature of the Chern connection for the metric $h$. The metric $h$ is bounded with respect to $k$. That is, there exists a constant $C>0$ depending only on both $h$ and $k$ such that the inequality $C^{-1}k\leq h\leq Ck$ holds uniformly on $X$.
This $h$ is called a {\rm Hermitian-Einstein metric} on the filtered regular Higgs bundle  
$\left(E, \phi, \{E_{\alpha}\}\right)$.
\end{theorem}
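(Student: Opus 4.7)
The plan breaks into four short tasks: (i) verify that $\phi$ is a well-defined filtration-preserving Higgs field; (ii) compute the parabolic degree of $E_{0}$ and check it equals $\sum_{i}[\theta_{i}]$; (iii) show the only $\phi$-invariant saturated line subbundle is $L_{0}:=K^{-1/2}\otimes\mathcal{O}_{\overline X}(\tilde D-D)$; (iv) confirm that the slope inequality for $L_{0}$ is exactly the hypothesis $\chi(\overline X)+\sum_{i}(\theta_{i}-1)<0$.

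For (i) and (ii): the $(2,1)$-entry $\tfrac{1}{2}\,1_{\tilde D}$ is naturally a section of
\[
\mathrm{Hom}\bigl(K^{1/2},\,K^{-1/2}\otimes\mathcal{O}_{\overline X}(\tilde D-D)\bigr)\otimes K\otimes\mathcal{O}_{\overline X}(D)\;\cong\;\mathcal{O}_{\overline X}(\tilde D),
\]
whose zero divisor is $\tilde D$, so $\phi$ is well defined. Filtration preservation $\phi(E_{\alpha})\subset E_{\alpha}\otimes K\otimes\mathcal{O}_{\overline X}(D)$ is a routine local check in frames adapted to the flag at each $p_{i}$, which I would carry out case by case. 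Since $\deg E_{0}=\deg K^{1/2}+\deg\bigl(K^{-1/2}\otimes\mathcal{O}_{\overline X}(\tilde D-D)\bigr)=\sum_{i}[\theta_{i}]-m$, and each $p_{i}$ contributes $\alpha_{1}+\alpha_{2}=1$ in Case~1 or $\tfrac{1}{2}\cdot 2=1$ in Case~2 to the weight sum, the parabolic degree of $E_{0}$ equals $\sum_{i}[\theta_{i}]-m+m=\sum_{i}[\theta_{i}]$, as asserted.

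For (iii): at any generic point $x$ the matrix $\phi_{x}$ is nilpotent of rank one with $\ker\phi_{x}=\operatorname{image}\phi_{x}=(L_{0})_{x}$, so the unique $\phi_{x}$-invariant line in the fibre is $(L_{0})_{x}$. Concretely, if $L\subset E_{0}$ is a saturated line subbundle admitting a local generator $s=s_{1}e_{1}+s_{2}e_{2}$ with $s_{1}\not\equiv 0$, then at a generic point $\phi(s)=(0,\tfrac{1}{2}\,1_{\tilde D}s_{1})$ is non-zero but not proportional to $s$, contradicting the relation $\phi(L)\subset L\otimes K\otimes\mathcal{O}_{\overline X}(D)$. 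Hence $s_{1}\equiv 0$ and $L=L_{0}$; in particular $K^{1/2}$ is excluded. Direct inspection shows $\phi(L_{0})=0$, so $L_{0}$ is indeed $\phi$-invariant and is the sole destabilizing candidate.

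For (iv): one has $\deg L_{0}=1-g+\sum_{i}[\theta_{i}]-m$, and the induced parabolic weight of $L_{0}$ at $p_{i}$ is $\alpha_{2}=\tfrac{1}{2}(1+\{\theta_{i}\})$ in Case~1 (because $(L_{0})_{p_{i}}$ coincides with the inner flag line) and $\tfrac{1}{2}$ in Case~2. Summing gives
\[
\deg\bigl(L_{0},\{L_{0,\alpha}\}\bigr)=1-g+\sum_{i}[\theta_{i}]-\tfrac{m}{2}+\tfrac{1}{2}\sum_{i}\{\theta_{i}\},
\]
so the stability inequality $\deg(L_{0},\{L_{0,\alpha}\})<\tfrac{1}{2}\sum_{i}[\theta_{i}]$ becomes, after combining $\sum_{i}[\theta_{i}]+\sum_{i}\{\theta_{i}\}=\sum_{i}\theta_{i}$, precisely $\chi(\overline X)+\sum_{i}(\theta_{i}-1)<0$. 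The main subtlety is verifying that the induced flag weight on $L_{0}$ is $\alpha_{2}$ (not $\alpha_{1}$) by tracing carefully through the convention $(L_{0})_{\alpha}=L_{0}\cap E_{\alpha}$ together with the left-continuity of $\{E_{\alpha}\}$; once this bookkeeping is done, the entire stability claim collapses to the single algebraic identity above, matching the hypothesis verbatim.
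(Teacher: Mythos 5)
Your proposal does not address the statement at hand. The statement is Simpson's existence theorem for Hermitian--Einstein metrics on a parabolic Higgs stable filtered regular Higgs bundle over a punctured curve: given an acceptable initial metric $k$, one must produce a metric $h$, mutually bounded with $k$, solving Hitchin's self-duality equation. This is an analytic existence result (in the paper it is quoted from \cite[Theorem 6]{Simpson:1990} and not reproved), whose proof requires the machinery of Donaldson-type heat flow or a variational argument on the noncompact surface $X$, together with the a priori curvature control coming from acceptability and the use of stability to rule out degeneration of the flow. What you have written instead is a proof of Lemma \ref{lemma:construction of parabolic Higgs bundle}: you verify that $\phi$ is a well-defined filtration-preserving Higgs field, compute the parabolic degree $\sum_i[\theta_i]$, identify $K^{-1/2}\otimes\mathcal{O}_{\overline X}(\tilde D-D)$ as the unique $\phi$-invariant line subbundle, and reduce the slope inequality to $\chi(\overline X)+\sum_i(\theta_i-1)<0$. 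Those steps are essentially the ones the paper carries out in Section \ref{section:Proofs of the key Lemma} for that lemma, and your bookkeeping there looks sound, but none of it bears on the existence of the Hermitian--Einstein metric.

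Concretely, the gap is total with respect to the assigned statement: there is no construction of $h$, no use of the acceptability hypothesis on $k$, no argument for the mutual boundedness $C^{-1}k\le h\le Ck$, and no point at which stability enters as the obstruction-vanishing condition for solving the PDE. If the intent was to prove Theorem \ref{theorem:Hermitian-Einstein metric}, you would need to either reproduce Simpson's analysis (construction of the Donaldson functional on metrics mutually bounded with $k$, properness from stability, and regularity/decay estimates near the punctures) or, as the paper does, cite it as known and instead supply the acceptable model metrics near each $p_i$ that make the theorem applicable.
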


\section{Proofs of Lemma \ref{lemma:construction of parabolic Higgs bundle} and Theorem \ref{thm:rat}}
\label{section:Proofs of the key Lemma}

\subsection{Constructing parabolic Higgs stable bundles}
\quad Inspired by the work in \cite{KW:2018}, we use the language of filtered regular Higgs bundles to provide the construction in Lemma \ref{lemma:construction of parabolic Higgs bundle} and prove that this filtered regular Higgs bundle is parabolic Higgs stable.

\begin{proof}[Proof of Lemma \ref{lemma:construction of parabolic Higgs bundle}]
Recall that $D=p_{1}+\cdots+p_{m}$ and $\tilde{D}=[\theta_{1}]p_{1}+\cdots+[\theta_{m}]p_{m}$.
For each $\alpha\in[0,1)$, define the divisors
\[
  D_{1}(\alpha)=\sum _{i=1}^{m}\epsilon _{i} p_{i},\quad \text{where}\ \epsilon_{i} = \begin{cases}
   0 ,& \text{if}\ \alpha\leq \frac{1}{2}(1-\{\theta_{i}\}); \\
   1 ,& \text{if}\ \alpha>\frac{1}{2}(1-\{\theta_{i}\}).
  \end{cases}
\]
\[
  D_{2}(\alpha)=\sum _{i=1}^{m}\epsilon' _{i} p_{i},\quad \text{where}\ \epsilon'_{i} = \begin{cases}
   0 ,& \text{if}\ \alpha\leq \frac{1}{2}(1+\{\theta_{i}\}); \\
   1 ,& \text{if}\ \alpha>\frac{1}{2}(1+\{\theta_{i}\}).
  \end{cases}
\]
Note that $D_{1}(0)=0$ and $D_{2}(0)=0$.
Let $E_{0}=K^{\frac{1}{2}}\oplus \Big(K^{-\frac{1}{2}}\otimes \mathcal{O}_{\overline{X}}(\tilde{D}-D)\Big)$ be a bundle over $\overline{X}$, and let $E$ denote the restriction of $E_0$ to $X$. 
For each $\alpha\in [0,1)$, the associated extension of $E$ across the punctures is given by
\[
E_{\alpha}=\Big(K^{\frac{1}{2}}\otimes\mathcal{O}_{\overline{X}}\big(-D_{1}(\alpha)\big)\Big)\oplus \Big(K^{-\frac{1}{2}}\otimes\mathcal{O}_{\overline{X}}\big(-D_{2}(\alpha)+\tilde{D}-D\big)\Big)
\]
so that $\{E_\alpha\}$ forms a filtered vector bundle on $X$.
In particular, we have $E_{0}=K^{\frac{1}{2}}\oplus \Big(K^{-\frac{1}{2}}\otimes \mathcal{O}_{\overline{X}}(\tilde{D}-D)\Big)$.
Since 
$$\big(K^{\frac{1}{2}}\big )^{-1}\otimes \Big(K^{-\frac{1}{2}}\otimes \mathcal{O}_{\overline{X}}(\tilde{D}-D)\Big) \otimes K\otimes\mathcal{O}_{\overline{X}}(D)=\mathcal{O}_{\overline{X}}(\tilde{D}), $$
we can define the following Higgs field
\[
\phi=\frac{1}{2}\left (\begin{array}{ccc}
0 & 0    \\
1_{\tilde{D}} & 0   \\
\end{array}\right)\in H^{0}\big({\rm End}(E_{0})\otimes K\otimes\mathcal{O}_{\overline{X}}(D) \big)
\]
for the filtered vector bundle $\{E_\alpha\}$, 
where $1_{\tilde{D}}$ denote the natural section given by the constant function $1$.
Therefore, we obtain a filtered regular Higgs bundle $\big(E,\,\phi,\, \{E_\alpha\}  \big)$ with parabolic degree
\begin{align*}
\deg (E,\{E_{\alpha}\})&=\deg E_{0}+\sum _{i=1}^{m}\frac{1}{2}(1-\{\theta_{i}\})+\sum _{i=1}^{m}\frac{1}{2}(1+\{\theta_{i}\})
=\sum _{i=1}^{m}[\theta_{i}].
\end{align*}

Suppose that  $\big(F,\,\phi,\, \{F_\alpha\}  \big)$  is a filtered Higgs subbundle.  Let $v$ be a local smooth section of $F_{0}$. Then $v=fv_{1}+gv_{2}$, where $v_{1}\in \Gamma\big(K^{\frac{1}{2}}\big)$ and $v_{2}\in \Gamma\big(K^{-\frac{1}{2}}\otimes \mathcal{O}_{\overline{X}}(\tilde{D}-D)\big)$. Since $\phi(v)\subset F_{0}\otimes K\otimes\mathcal{O}_{\overline{X}}(D)$, we have $F_{0}=K^{-\frac{1}{2}}\otimes \mathcal{O}_{\overline{X}}(\tilde{D}-D)$.

The only filtered Higgs subbundle of $\big(E,\,\phi,\, \{E_\alpha\}  \big)$  is 
$$
\Big(K^{-\frac{1}{2}}\otimes \mathcal{O}_{\overline{X}}(\tilde{D}-D),\big\{K^{-\frac{1}{2}}\otimes\mathcal{O}_{\overline{X}}\big(-D_{2}(\alpha)+\tilde{D}-D\big)\big\}_{\alpha}\Big),$$ 
which has the parabolic degree of
\begin{align*}
&\deg\Big(K^{-\frac{1}{2}}\otimes \mathcal{O}_{\overline{X}}(\tilde{D}-D),\big\{K^{-\frac{1}{2}}\otimes\mathcal{O}_{\overline{X}}\big(-D_{2}(\alpha)+\tilde{D}-D\big)\big\}_{\alpha}\Big)\\
=&\deg K^{-\frac{1}{2}}\otimes \mathcal{O}_{\overline{X}}(\tilde{D}-D)+ \sum _{i=1}^{m}\frac{1}{2}(1+\{\theta\})\\
=&\frac{\chi(\overline{X})}{2}+\sum _{i=1}^{m}[\theta_{i}]-\frac{1}{2}m+\sum _{i=1}^{m}\frac{1}{2}\{\theta_{i}\}
\end{align*}
Hence, the parabolic Higgs stability condition
\[
\text{slope}\Big(K^{-\frac{1}{2}}\otimes \mathcal{O}_{\overline{X}}(\tilde{D}-D),\big\{K^{-\frac{1}{2}}\otimes\mathcal{O}_{\overline{X}}\big(-D_{2}(\alpha)+\tilde{D}-D\big)\big\}_{\alpha}\Big)<  \text{slope}(E,\{E_{\alpha}\})
\]
of $\big(E,\,\phi,\, \{E_\alpha\}  \big)$ reduces to
\[
\frac{\chi(\overline{X})}{2}+\sum _{i=1}^{m}[\theta_{i}]-\frac{1}{2}m+\sum _{i=1}^{m}\frac{1}{2}\{\theta_{i}\}<\frac{1}{2}\sum _{i=1}^{m}[\theta_{i}],
\]
which simplifies to
$
-\chi(\overline{X})+m-\sum _{i=1}^{m}\theta_{i}>0.
$
This coincides with the assumption $\chi (\overline{X})+\sum _{i=1}^{m}(\theta _{i}-1)<0$ in the lemma.

\end{proof}

\subsection{Proof of Theorem \ref{thm:rat}}
\label{subsec:thm1.1}

\quad In this subsection, we provide the proof of Theorem \ref{thm:rat} using the parabolic Higgs stable bundle $\big(E,\{E_\alpha\}\big)\to X$  constructed in Lemma \ref{lemma:construction of parabolic Higgs bundle}. In particular, we first construct an acceptable Hermitian metric (Definition \ref{definition:acceptable metric}) on the algebraic vector bundle $E\to X$. Then, applying Theorem \ref{theorem:Hermitian-Einstein metric}, we derive a Hermitian-Einstein metric that satisfies Hitchin's self-dual equation. This metric, in turn, induces a  hyperbolic metric that represents $\hat{D}$ on $\overline{X}$. This subsection builds upon the work in \cite{KW:2018}.

\begin{proof}[Proof of Theorem \ref{thm:rat}]
For each $p_i$ and its sufficiently small neighborhood $U\subset \overline{X}$, we first construct a local model metric on 
$E|_{U\setminus\{p_i\}}$, and then extend it to an acceptable Hermitian metric on $X$.

Recall that $E_{0}=K^{\frac{1}{2}}\oplus \Big(K^{-\frac{1}{2}}\otimes \mathcal{O}_{\overline{X}}(\tilde{D}-D)\Big)$ is a bundle over $\overline{X}$, and $E$ denotes its restriction to $X$.  Recall that $\omega$ is Euclidean near $p_i$. 
In a neighborhood $U$ of $p_{i}$, we can choose a complex coordinate $z$ around $p_i$ and a local holomorphic frame $e=(e_{1},e_{2})$ of the bundle $E_{0}$ such that $\omega=\frac{\rm i}{2}{\rm d}z\wedge {\rm d}\bar{z}$ and
$1_{\tilde{D}}|_{U}=z^{[\theta_{i}]}e^{*}_{1}\otimes e_{2}\cdot \frac{1}{z}{\rm d}z$. Thus, we can write $\phi$ as
\[
\phi=\left (\begin{array}{ccc}
0 & 0    \\
\frac{1}{2}z^{[\theta_{i}]} & 0   \\
\end{array}\right)z^{-1}{\rm d}z.
\]
We now express $({\rm d}z^{\frac{1}{2}},{\rm d}z^{-\frac{1}{2}})$ as a local holomorphic frame for $K^{\frac{1}{2}}\oplus K^{-\frac{1}{2}}$. Therefore, $e=(e_{1},e_{2})=({\rm d}z^{\frac{1}{2}},z^{1-[\theta_{i}]}{\rm d}z^{-\frac{1}{2}})$ forms a local holomorphic frame for $K^{\frac{1}{2}}\oplus\Big( K^{-\frac{1}{2}}\otimes\mathcal{O}_{\overline{X}}(\tilde{D}-D)\Big)$. 

Case 1.  If $\theta_{i}>0$, then we define the Hermitian metric $k_{\theta_{i}}$ on $E|_{U\setminus\{p_i\}}$ as
\begin{align*}
k_{\theta_{i}}(r)&=\left(
\begin{array}{ccc}
k_{\theta_{i}}(e_{1},e_{1}) & k_{\theta_{i}}(e_{1},e_{2})    \\
k_{\theta_{i}}(e_{2},e_{1}) & k_{\theta_{i}}(e_{2},e_{2})   \\
\end{array}\right)\\
&=\left(
\begin{array}{ccc}
e^{cr^{2}}\cdot\frac{r^{1-\{\theta_{i}\}}(1-r^{2\theta_{i}})}{2\theta_{i}} & 0   \\
0 & e^{cr^{2}}\cdot\frac{2\theta_{i}}{r^{-1-\{\theta_{i}\}}(1-r^{2\theta_{i}})}  \\
\end{array}\right)\\
&=\left(
\begin{array}{ccc}
e^{cr^{2}}\cdot\frac{r^{1+[\theta_{i}]}(r^{-\theta_{i}}-r^{\theta_{i}})}{2\theta_{i}} & 0   \\
0 & e^{cr^{2}}\cdot\frac{2\theta_{i}}{r^{-1+[\theta_{i}]}(r^{-\theta_{i}}-r^{\theta_{i}})}  \\
\end{array}\right)\\
&=\left(
\begin{array}{ccc}
-e^{cr^{2}}\cdot\frac{r^{1+[\theta_{i}]}}{\theta_{i}}\sinh (\theta_{i}\log r) & 0   \\
0 &  -e^{cr^{2}}\cdot\frac{\theta_{i}}{r^{-1+[\theta_{i}]}\sinh (\theta_{i}\log r)}  \\
\end{array}\right)\, .
\end{align*}
where $c=-\frac{\pi\sum _{i=1}^{m}[\theta_{i}]}{2\int_{X}\omega}$ and $|z|=r$. Then we have
\[
|e_{1}|_{k_{\theta_{i}}}\sim r^{\frac{1}{2}(1-\{\theta_{i}\})} \quad {\rm and}\quad |e_{2}|_{k_{\theta_{i}}}\sim r^{\frac{1}{2}(1+\{\theta_{i}\})}.
\]
The Hermitian adjoint of the Higgs field is
\[
\phi^{*k_{\theta_{i}}}=\frac{\theta_{i}^{2}}{z^{[\theta_{i}]}\sinh ^{2}(\theta_{i}\log r)}\left (\begin{array}{ccc}
0 & \frac{1}{2}   \\
0 & 0   \\
\end{array}\right)\bar{z}^{-1}{\rm d}{\bar{z}}.
\]
The curvature of $k_{\theta_{i}}$ is given by
\[
F_{k_{\theta_{i}}}=\bar{\partial}(k_{\theta_{i}}^{-1}\partial k_{\theta_{i}})=\left (\begin{array}{ccc}
-\frac{\theta_{i}^{2}}{4r^{2}\sinh ^{2}(\theta_{i}\log r)}+c & 0   \\
0 & \frac{\theta_{i}^{2}}{4r^{2}\sinh ^{2}(\theta_{i}\log r)}+c  \\
\end{array}\right){\rm d}\bar{z}\wedge {\rm d}z\, .
\]
The induced metric $\hat{k}_{\theta_{i}}$ on $\left(K^{\frac{1}{2}}\oplus K^{-\frac{1}{2}}\right)_{U\setminus\{p_i\}}$ is 
\begin{align*}
\hat{k}_{\theta_{i}}&=\left(
\begin{array}{ccc}
k_{\theta_{i}}(e_{1},e_{1}) & k_{\theta_{i}}(e_{1},{\rm d}z^{-\frac{1}{2}})    \\
k_{\theta_{i}}({\rm d}z^{-\frac{1}{2}},e_{1}) & k_{\theta_{i}}({\rm d}z^{-\frac{1}{2}},{\rm d}z^{-\frac{1}{2}})   \\
\end{array}\right)\\
&=\left(
\begin{array}{ccc}
-e^{cr^{2}}\cdot\frac{r^{1+[\theta_{i}]}}{\theta_{i}}\sinh (\theta_{i}\log r)  & 0   \\
0 &  -e^{cr^{2}}\cdot\frac{\theta_{i}}{r^{1-[\theta_{i}]}\sinh (\theta_{i}\log r)}  \\
\end{array}\right)\, .
\end{align*}
The induced Higgs field on $\left(K^{\frac{1}{2}}\oplus K^{-\frac{1}{2}}\right)_{U\setminus\{p_i\}}$
 is 
\[
\hat{\phi}=\left (\begin{array}{rrr}
0 & 0    \\
\frac{1}{2} & 0   \\
\end{array}\right){\rm d}z.
\]
The Hermitian adjoint of this Higgs field is
\[
\hat{\phi}^{*\hat{k}_{\theta_{i}}}=\frac{\theta_{i}^{2}}{r^{2}\cdot\sinh ^{2}(\theta_{i}\log r)}\left (\begin{array}{ccc}
0 & \frac{1}{2}   \\
0 & 0   \\
\end{array}\right){\rm d}{\bar{z}}.
\]
The curvature of $\hat{k}_{\theta_{i}}$ is
\[
F_{\hat{k}_{\theta_{i}}}=\left (\begin{array}{ccc}
-\frac{\theta_{i}^{2}}{4r^{2}\sinh ^{2}(\theta_{i}\log r)}+c & 0   \\
0 & \frac{\theta_{i}^{2}}{4r^{2}\sinh ^{2}(\theta_{i}\log r)}+c  \\
\end{array}\right){\rm d}\bar{z}\wedge {\rm d}z\, .
\]
Thus, $F_{\hat{k}_{\theta_{i}}}$ is acceptable and satisfies Hitchin's self-dual equation 
\[
F_{\hat{k}_{\theta_{i}}}+[\hat{\phi},\hat{\phi}^{*k_{\theta_{i}}}]=c\ {\rm Id}_{K^{\frac{1}{2}}\oplus K^{-\frac{1}{2}}}{\rm d}\bar{z}\wedge {\rm d}z
\]
on $\left(K^{\frac{1}{2}}\oplus K^{-\frac{1}{2}}\right)_{U\setminus\{p_i\}}$.

Case 2.  If $\theta_{i}=0$, then we define the Hermitian metric on $E|_{U\setminus\{p_i\}}$ as
\[
k_{0}(r)=\left(
\begin{array}{ccc}
 -e^{cr^{2}}\cdot r\log r & 0   \\
0 & -e^{cr^{2}}\cdot\frac{r}{\log r}   \\
\end{array}\right)\\
\]
where $|z|=r$. Then we have 
\[
|e_{1}|_{k_{0}}\leq  Cr^{\frac{1}{2}-\epsilon}, \quad |e_{2}|_{k_{0}}\leq  Cr^{\frac{1}{2}-\epsilon}
\]
for all $\epsilon>0$.
Note that the Higgs field is
\[
\phi=\left (\begin{array}{ccc}
0 & 0    \\
\frac{1}{2} & 0   \\
\end{array}\right)z^{-1}{\rm d}z.
\]
The Hermitian adjoint of the Higgs field is
\[
\phi^{*k_{0}}=\frac{1}{(\log r)^{2}}\left (\begin{array}{ccc}
0 & \frac{1}{2}   \\
0 & 0   \\
\end{array}\right)\bar{z}^{-1}{\rm d}{\bar{z}}.
\]
The curvature of $k_{0}$ is
\[
F_{k_{0}}=\bar{\partial}(k_{0}^{-1}\partial k_{0})=\left (\begin{array}{ccc}
-\frac{1}{4r^{2}(\log r)^{2}}+c & 0   \\
0 & \frac{1}{4r^{2}(\log r)^{2}}+c  \\
\end{array}\right){\rm d}\bar{z}\wedge {\rm d}z.
\]
The induced metric $\hat{k}_{0}$ on $K^{\frac{1}{2}}\oplus K^{-\frac{1}{2}}$ is 
\begin{align*}
\hat{k}_{0}&=\left(
\begin{array}{ccc}
k_{0}(e_{1},e_{1}) & k_{0}(e_{1},{\rm d}z^{-\frac{1}{2}})    \\
k_{0}({\rm d}z^{-\frac{1}{2}},e_{1}) & k_{0}({\rm d}z^{-\frac{1}{2}},{\rm d}z^{-\frac{1}{2}})   \\
\end{array}\right)\\
&=\left(
\begin{array}{ccc} -e^{cr^{2}}\cdot r\log r  & 0   \\
0 &  -e^{cr^{2}}\cdot\frac{1}{r\log r}  \\
\end{array}\right)\, .
\end{align*}
The induced Higgs field is
\[
\hat{\phi}=\left (\begin{array}{rrr}
0 & 0    \\
\frac{1}{2} & 0   \\
\end{array}\right){\rm d}z.
\]
The Hermitian adjoint of the Higgs field is
\[
\hat{\phi}^{*\hat{k}_{0}}=\frac{1}{r^{2}(\log r)^{2}}\left (\begin{array}{ccc}
0 & \frac{1}{2}   \\
0 & 0   \\
\end{array}\right){\rm d}{\bar{z}}.
\]
The curvature of $\hat{k}_{0}$ is
\[
F_{\hat{k}_{0}}=\left (\begin{array}{ccc}
-\frac{1}{4r^{2}(\log r)^{2}}+c & 0   \\
0 & \frac{1}{4r^{2}(\log r)^{2}}+c  \\
\end{array}\right){\rm d}\bar{z}\wedge {\rm d}z.
\]
Therefore, $F_{\hat{k}_{0}}$ is acceptable and satisfies Hitchin's self-dual equation
\[
F_{\hat{k}_{0}}+[\hat{\phi},\hat{\phi}^{*\hat{k}_{0}}]=c\ {\rm Id}_{K^{\frac{1}{2}}\oplus K^{-\frac{1}{2}}}{\rm d}\bar{z}\wedge {\rm d}z
\]
on $\left(K^{\frac{1}{2}}\oplus K^{-\frac{1}{2}}\right)_{U\setminus\{p_i\}}$.\\

From the constructions above, we have defined local model metrics in a neighborhood of each point $p_{i}$. Using these local metrics and partition of unity, we can obtain an acceptable Hermitian metric on $E$ over $X$. Since the filtered Higgs bundle $(E,\phi,\{E_{\alpha}\})$ is parabolic Higgs stable, Theorem \ref{theorem:Hermitian-Einstein metric} guarantees the existence of a Hermitian-Einstein metric $H$ on $E$ that satisfies the equation
\[
F_{H}+[\hat{\phi},\hat{\phi}^{*H}]=-2\pi {\rm i}\cdot \frac{\text{slope}(E,\{E_{\alpha}\})}{{\text{vol}}(X)}\ {\rm Id}_{E}\otimes \omega.
\]
Moreover, the Hermitian-Einstein metric $H$ is bounded  with respect to the model metric constructed above in a neighborhood of each $p_{i}$.

Note that $H$ is a metric on $K^{\frac{1}{2}}\oplus K^{-\frac{1}{2}}$. Assume that the matrix representation of $H$ under the holomorphic local frame $\left({\rm d}z^{\frac{1}{2}},\, {\rm d}z^{-\frac{1}{2}}\right)$ is
\[
H=\left (\begin{array}{ccc}
e^{v} & 0    \\
0 & e^{w}   \\
\end{array}\right).
\]
Then, $H$ induces a Riemannian metric $\widehat{{\rm d}s^{2}}=e^{w-v}|{{\rm d}z}|^{2}$ on the punctured Riemann surface, which is compatible with the Riemann surface structure.
Let
\[
\hat{H}=\left (\begin{array}{ccc}
e^{\frac{1}{2}(v-w)}  & 0    \\
0 & e^{\frac{1}{2}(w-v)}   \\
\end{array}\right)
\]
be the induced metric on $K^{\frac{1}{2}}\oplus K^{-\frac{1}{2}}$. Since $\hat{H}$ satisfies Hitchin's self-duality equations $F_{\hat{H}}+[\hat{\phi},\hat{\phi}^{*\hat{H}}]=0$, 
on the punctured surface $X$ it has constant Gaussian curvature $K_{\widehat{{\rm d}s^{2}}}\equiv -1$ (cf. Example 1.5 of \cite{Hitchin:1987}). Since an isolated singularity of a conformal hyperbolic metric can only be a cone singularity or a cusp singularity, it follows that $p_{i}$ is either a cone or a cusp singularity of $\widehat{{\rm d}s^{2}}$.

Case 1. If $\theta_{i}>0$, then $H$ is asymptotic to $\hat{k}_{\theta_{i}}$ near $p_{i}$. Then $\hat{H}$ is asymptotic to
\begin{align*}
\left(
\begin{array}{ccc}
-\frac{r}{\theta_{i}}\sinh (\theta_{i}\log r)  & 0   \\
0 &  -\frac{\theta_{i}}{r\sinh (\theta_{i}\log r)}  \\
\end{array}\right).
\end{align*}
Therefore, near $p_{i}$, the metric $\widehat{{\rm d}s^{2}}$ is asymptotic to
\begin{align*}
e^{w-v}&\sim \frac{\theta_{i}^{2}}{r^{2}\sinh ^{2}(\theta_{i}\log r)}=r^{2(\theta_{i}-1)}\left(\frac{4\theta^{2}_{i}}{(r^{2\theta_{i}}-1)^{2}}\right)
\\
&\sim r^{2(\theta_{i}-1)}.
\end{align*}
In this case, $\widehat{{\rm d}s^{2}}$ has a cone singularity at $p_{i}$ with cone angle $2\pi \theta_{i}$.

Case 2. If $\theta_{i}=0$, note that $\hat{H}$ is asymptotic to
\begin{align*}
\left(
\begin{array}{ccc} -r\log r  & 0   \\
0 &  -\frac{1}{r\log r}  \\
\end{array}\right).
\end{align*}
Hence, near $p_{i}$, the metric $\widehat{{\rm d}s^{2}}=e^{w-v}|{\rm d}z|^2$ is asymptotic to
$
\frac{|{\rm d}z|^2}{r^{2}(\log r)^{2}}.
$
In this case, $\widehat{{\rm d}s^{2}}$ has a cusp singularity at $p_{i}$.\\

The proof of uniqueness is straightforward, as discussed in \cite[Chapter \uppercase\expandafter{\romannumeral 2}]{Heins:1962}, and we present it here for completeness. Recall that $\mathrm{d} s_{0}^{2}$ is a smooth conformal metric on $\overline{X}$, which is Euclidean in a neighborhood of each point $p_{i}$. Let $\widehat{{\rm d}s^{2}}$ and ${\rm d}\sigma^{2}$
be two singular hyperbolic metrics representing the same divisor $\mathrm{\hat{D}}$. Assume that $\widehat{{\rm d}s^{2}}=e^{2\phi}\mathrm{d} s_{0}^{2}$ and ${\rm d}\sigma^{2}=e^{2\varphi}\mathrm{d} s_{0}^{2}$. Then, since the two singular hyperbolic metrics represent the same divisor, we have $\phi-\varphi\in C(\overline{X})\cap C^{\infty}(X)$.  By computation, we have $\Delta (\phi-\varphi)=f(e^{2\phi}-e^{2\varphi})$, where $f$ is a positive function on $X$. Define $\psi:=\max\{\phi-\varphi,0\}$. Then $\psi$ is subharmonic on $X$ and continuous on $\overline{X}$, so subharmonic on $\overline{X}$. Since $\overline{X}$ is compact, $\psi$ must be constant. Given that the curvatures of both $\widehat{{\rm d}s^{2}}$ and ${\rm d}\sigma^{2}$ are $-1$, this constant must be zero, which implies that $\phi\leq\varphi$. By symmetry, we conclude that $\phi=\varphi$.

\end{proof}

\section{Deformations of the Higgs field}
\label{section-universal}
\quad In this section, we deform the Higgs field $\phi$ as introduced in Lemma \ref{lemma:construction of parabolic Higgs bundle}. This deformation yields a family of Riemannian hyperbolic metrics, all possessing the same singularity type. This section builds on the work of \cite{Hitchin:1987, BGG:1997, BBDH:2021}. Throughout this section, we continue to use the notation of the previous sections.

Recall that $E_{0}=K^{\frac{1}{2}}\oplus \Big(K^{-\frac{1}{2}}\otimes \mathcal{O}_{\overline{X}}(\tilde{D}-D)\Big)$. Note that
\begin{align}
\label{align:e}
{\rm Hom} \Big(K^{-\frac{1}{2}}\otimes \mathcal{O}_{\overline{X}}(\tilde{D}-D),\ K^{\frac{1}{2}}\Big) \otimes K\otimes\mathcal{O}_{\overline{X}}(D)  \notag
\\=K^{2}\otimes\mathcal{O}_{\overline{X}}(2D-\tilde{D})
\subset {\rm End}(E_{0})\otimes K\otimes\mathcal{O}_{\overline{X}}(D).
\end{align}
and
\begin{align*}
&\deg {\rm Hom} \Big(K^{-\frac{1}{2}}\otimes \mathcal{O}_{\overline{X}}(\tilde{D}-D),\ K^{\frac{1}{2}}\Big)\otimes K=\deg K^{2}\otimes \mathcal{O}_{\overline{X}}(D-\tilde{D})\\
=&2(2g_{\overline{X}}-2)+m-\sum _{i=1}^{m}[\theta_{i}]
\geq 2g_{\overline{X}}-2+\left(2g_{\overline{X}}-2+m-\sum _{i=1}^{m}\theta_{i}\right)\\
>&2g_{\overline{X}}-2.
\end{align*}
We can embed $H^{0}\big(\overline{X},K^{2}\otimes\mathcal{O}_{\overline{X}}(D-\tilde{D})\big)$ into $H^{0}\big(K^{2}\otimes\mathcal{O}_{\overline{X}}(2D-\tilde{D})\big)$ via the mapping
$s\longmapsto s\otimes 1_{D}$. Therefore, using (\ref{align:e}), we obtain a natural embedding:
\begin{align}
\rho\colon H^{0}\big(\overline{X},K^{2}\otimes\mathcal{O}_{\overline{X}}(D-\tilde{D})\big) \longrightarrow H^{0}\big(\overline{X},{\rm End}(E_{0})\otimes K\otimes\mathcal{O}_{\overline{X}}(D)\big)
\end{align}
Note that the image of $\rho$ is contained in the image of the inclusion
\[
H^{0}\big(\overline{X},{\rm End}(E_{0})\otimes K\big)\longrightarrow H^{0}\big(\overline{X},{\rm End}(E_{0})\otimes K\otimes\mathcal{O}_{\overline{X}}(D)\big)
\]
With a slight abuse of notation, for any $a\in H^{0}\big(\overline{X},K^{2}\otimes\mathcal{O}_{\overline{X}}(D-\tilde{D})\big)$, we will also denote the corresponding element in $H^{0}\big(\overline{X},{\rm End}(E_{0})\otimes K\big)$ by $\rho(a)$.

In the following, we present the proof of the main theorem of this paper.

\begin{proof}[Proof of Theorem \ref{theorem:main theorem}]
Define a family of automorphisms of $E_{0}$ by
\[
T_\epsilon:=\left (\begin{array}{ccc}
\epsilon & 0    \\
0 &  1  \\
\end{array}\right)\quad {\rm where}\quad \epsilon>0.
\]
The parabolic Higgs bundle $(E_{0},\Phi_{a})$ is isomorphic to $(E_{0},T_\epsilon^{-1}\circ \Phi_{a}\circ T_\epsilon)$. Moreover,  $(E_{0},T_\epsilon^{-1}\circ \Phi_{a}\circ T_\epsilon)$ is parabolic Higgs stable if and only if so is $(E_{0},\Phi_{a})$. Since $\epsilon \neq 0$, we conclude that $(E_{0},T_\epsilon^{-1}\circ \Phi_{a}\circ T_\epsilon)$ is parabolic Higgs stable if and only if  so is $(E_{0},\frac{1}{\epsilon}T_\epsilon^{-1}\circ \Phi_{a}\circ T_\epsilon)$. By direct computation, we have 
\[
\frac{1}{\epsilon}T_\epsilon^{-1}\circ \Phi_{a}\circ T_\epsilon=\left (\begin{array}{ccc}
0 &  \frac{1}{\epsilon^{2}} a\otimes  1_{D}    \\
\frac{1}{2}1_{\tilde{D}} & 0   \\
\end{array}\right)=\Phi_{a/\epsilon^{2}}.
\]
By the openness of the stability condition, since $(E_{0},\phi)$ is stable, there exists a non-empty open set $V\subset H^{0}\big(\overline{X},K^{2}\otimes\mathcal{O}_{\overline{X}}(D-\tilde{D})\big)$ containing the origin, such that for every $a\in V$, the parabolic Higgs bundle $(E_{0},\Phi_{a})$ is parabolic stable. Choosing $\epsilon>0$ sufficiently large such that $\Phi_{a/\epsilon^{2}}\in V$, we conclude that $(E_{0},\Phi_{a})$ is parabolic stable for any $a\in H^{0}\big(\overline{X},K^{2}\otimes\mathcal{O}_{\overline{X}}(D-\tilde{D})\big)$.\\

1. The vector bundle $E_{0}$ is endowed with the Hermitian-Einstein metric $H_{a}$, which induces a metric $h_a$ on the line bundle $K$.  Using these two metrics, we construct a Hermitian metric on ${\rm End}(E_{0})\otimes K$. Since $\rho(a)\in H^{0}\big(\overline{X},{\rm End}(E_{0})\otimes K\big)$, we can compute its pointwise norm with respect to this metric.

Next, we will examine the behavior of $||\rho(a)||_{H_{a},h_{a}}$ near the punctures. First, observe that
\[
\rho(a)=\left (\begin{array}{ccc}
0 & a\otimes 1_{D}    \\
0 & 0   \\
\end{array}\right)\in H^{0}\big(\overline{X},{\rm End}(E_{0})\otimes K\otimes\mathcal{O}_{\overline{X}}(D)\big)
\]
and
\[
\Phi_{a}=\left (\begin{array}{ccc}
0 & a\otimes 1_{D}     \\
\frac{1}{2}\cdot 1_{\tilde{D}} & 0   \\
\end{array}\right),
\]

We consider $({\rm d}z^{\frac{1}{2}},{\rm d}z^{-\frac{1}{2}})$ as a local holomorphic frame of $K^{\frac{1}{2}}\oplus K^{-\frac{1}{2}}$. Then, the pair $e=(e_{1},e_{2})=({\rm d}z^{\frac{1}{2}},z^{1-[\theta_{i}]}{\rm d}z^{-\frac{1}{2}})$ forms a local holomorphic frame for the bundle $K^{\frac{1}{2}}\oplus\Big( K^{-\frac{1}{2}}\otimes\mathcal{O}_{\overline{X}}(\tilde{D}-D)\Big)$.
In a neighborhood $U$ of $p_{i}$, we choose a coordinate $z$ around $p_i$ such that $1_{\tilde{D}}|_{U}=z^{[\theta_{i}]}e^{*}_{1}\otimes e_{2}\cdot \frac{1}{z}{\rm d}z$ and $a=f(z)e_{2}^{*}\otimes e_{1}\cdot {\rm d}z$, where $f(z)$ is a holomorphic function on $U$. Thus, we have $a\otimes 1_{D}=zf(z)e_{2}^{*}\otimes e_{1}\cdot \frac{1}{z}{\rm d}z$ around $p_i$. We can now  express $\Phi_{a}$ and $\rho(a)$ as 
\[
\Phi_{a}=\left (\begin{array}{ccc}
0 &  zf(z)   \\
\frac{1}{2}z^{[\theta_{i}]} &  0   \\
\end{array}\right)z^{-1}{\rm d}z.
\]
and
\[
\rho(a)=\hat{\rho}(a){\rm d}z=\left (\begin{array}{ccc}
0 & f(z)   \\
0 &  0   \\
\end{array}\right){\rm d}z,
\]
Note that $\hat{\rho}(a)(e_{1})=0$ and $\hat{\rho}(a)(e_{2})=f(z)e_{1}$.
Thus, we have
\[
{\rm residue}\ (\Phi_{a})={\rm residue}\ (\phi).
\]
Therefore, by \cite[Sec.2]{Biquard:1997} and \cite[Sec.7]{Simpson:1990} (see also \cite[Definition 2.5]{KW:2018}), the two Hermitian metrics $H_{0}$ (corresponding to $a=0$) and $H_{a}$ on $E_{0}$ are mutually bounded, i.e., there exist positive constants $C_{1}$ and $C_{2}$ such that $C_{1}\cdot H_{0}\leq H_{a}\leq C_{2}\cdot H_{0}$.
Consequently, we have $||\rho(a)||_{H_{a},h_{a}}\sim ||\rho(a)||_{H_{0},h_{0}}$.

Case 1.  If $\theta_{i}=0$, then $||\rho(a)||_{H_{0},h_{0}}=||\hat{\rho}(a)||_{H_{0}}\cdot||{\rm d}z||_{h_{0}}\sim||\hat{\rho}(a)||_{k_{0}(r)}\cdot||{\rm d}z||_{h_{0}}=|f(z)|r|\log r|^{2}\leq C\cdot r|\log r|^{2}$. This implies that $||\rho(a)||$ tends to zero as we approach a puncture.

Case 2. If $\theta_{i}\neq 0$, then
\begin{align}
\label{align:local expression}
||\rho(a)||_{H_{0},h_{0}}=&||\hat{\rho}(a)||_{H_{0}}\cdot||{\rm d}z||_{h_{0}}\sim||\hat{\rho}(a)||_{k_{\theta_{i}}(r)}\cdot||{\rm d}z||_{h_{0}}  \notag \\
=&|f(z)|\frac{(1-r^{2\theta_{i}})^{2}}{4\theta_{i}^{2}}\cdot r^{1-\theta_{i}-\{\theta_{i}\}}
 \notag \\
=&|f(z)|\frac{(1-r^{2\theta_{i}})^{2}}{4\theta_{i}^{2}}\cdot r^{1-[\theta_{i}]-2\{\theta_{i}\}}.
\end{align}
Recall that, in the paragraph immediately preceding Theorem \ref{theorem:main theorem} in the introduction,  we already expressed the set of cone points as the following disjoint union 
\[
{\rm supp\, D}\setminus {\rm supp\, D_{0}}={\rm supp\, D_{1}} \cup {\rm supp\, D_{2}}\cup {\rm supp\, D_{3}}\cup {\rm supp\, D_{4}}.
\]

For any $x\in X$, choose a neighborhood $V\subset X$ and a local holomorphic coordinate $z$ such that $z(x)=0$. Then, $({\rm d}z^{\frac{1}{2}},{\rm d}z^{-\frac{1}{2}})$ is a local holomorphic frame for $E_{0}$. Assume that
$
\left (\begin{array}{ccc}
e^{v} & 0     \\
0 & e^{w}   \\
\end{array}\right)
$
is the matrix presentation of the Hermitian-Einstein metric $H_{a}$ with respect to $({\rm d}z^{\frac{1}{2}},{\rm d}z^{-\frac{1}{2}})$. The induced metric $\bar{H}_{a}$ has the matrix presentation
\[
\left (\begin{array}{ccc}
e^{\frac{1}{2}(v-w)} & 0     \\
0 & e^{\frac{1}{2}(w-v)}   \\
\end{array}\right),
\]
and we have $F_{\bar{H}_{a}}+[\Phi_{a},\Phi_{a}^{*\bar{H}_{a}}]=0$ on $K^{\frac{1}{2}}\oplus K^{-\frac{1}{2}}$ over $V$, where
\[
\Phi_{a}=\hat{\Phi}_{a}{\rm d}z=\left (\begin{array}{rrr}
0 & f(z)    \\
\frac{1}{2} & 0   \\
\end{array}\right){\rm d}z.
\]
Moreover, the induced K{\"a}hler metric on $X$ is $\hat{h}_{a}=g_{a}|{{\rm d}z}|^{2}=e^{w-v}|{{\rm d}z}|^{2}$, and the K{\"a}hler form is $\omega_{h_{a}}=\frac{\rm i}{2}e^{w-v}{\rm d}z\wedge {\rm d}\bar{z}$. By calculation, the self-duality equations become
\begin{equation}
\label{eqn:F_{1}}
F_{1}=(4||\rho(a)||^{2}_{H_{a},h_{a}}-1)({\rm i}\,\omega_{h_{a}}),
\end{equation}
where $F_{1}=\bar{\partial}\partial(v-w)$ is the curvature of the $U(1)$ connection on $K$.

In the neighborhood $V$, we can express $a=f(z){{\rm d}z}^{2}$, and
\[
\hat{h}_{a}=2f(z){{\rm d}z}^{2}+\left(g_{a}+4|f(z)|^{2}\cdot \frac{1}{g_{a}}\right){{\rm d}z}\otimes {{\rm d}\bar{z}}+2\overline{f(z)}{{\rm d}\bar{z}}^{2},
\]
which describes a real symmetric form on $TX$.
Therefore, $\hat{h}_{a}$ is a Riemannian metric on $X$ if and only if it is non-degenerate, i.e., if $\big(2||\rho(a)||_{H_{a},h_{a}}-1\big)^{2}>0$ for all $x\in X$. 
We will show that the norm of $\rho(a)$ is less than $\frac{1}{2}$  everywhere on $X$.


Next, suppose that
\[
0\neq a\in H^{0}\big(\overline{X},K^{2}\otimes\mathcal{O}_{\overline{X}}(D-2\tilde{D}-D_{3}-D_{4})\big).
\]
Then, by (\ref{align:local expression}), for any $\theta_{i}\neq 0$,   $||\rho(a)||$  tends to zero as we approach the punctures. Consequently, the infimum of the function $\frac{1}{2}-||\rho(a)||$ on $X$ must be attained at some point, say at $x_{0}\in X$.

Note that $\rho(a)$ can be viewed as a holomorphic quadratic differential on $X$, so ${\rm d}'' \rho(a)=0$. We have

\begin{align*}
{\rm d}''\langle {\rm d}' \rho(a), \rho(a)\rangle=&\langle {\rm d}''{\rm d}' \rho(a), \rho(a)\rangle-\langle {\rm d}' \rho(a), {\rm d}'\rho(a)\rangle \\
=& \langle F \rho(a), \rho(a)\rangle-\langle {\rm d}' \rho(a), {\rm d}'\rho(a)\rangle
\end{align*}
Using equation (\ref{eqn:F_{1}}), we obtain
\begin{align*}
\langle F \rho(a), \rho(a)\rangle=2(4||\rho(a)||^{2}_{H_{a},h_{a}}-1)({\rm i}\omega_{h_{a}})\langle \rho(a), \rho(a)\rangle
\end{align*}
Thus, we get
\begin{align*}
&{\rm d}''{\rm d}'\langle  \rho(a), \rho(a)\rangle={\rm d}''\langle {\rm d}'\rho(a), \rho(a)\rangle\\
=&2(1-4||\rho(a)||^{2})||\rho(a)||^{2}(-{\rm i}\omega_{h_{a}})-2||{\rm d}'\rho(a)||^{2}(-{\rm i} \omega_{h_{a}}),
\end{align*}
that is,
\[
\Delta ||\rho(a)||^{2}=2(1-4||\rho(a)||^{2})||\rho(a)||^{2}-2||{\rm d}' \rho(a)||^{2}
\]
where the Laplacian $\Delta$ is a positive operator. Since the operator $\mathcal{L}:=-\Delta-8||\rho(a)||^{2}$ is uniformly elliptic, we can apply the strong maximum principle \cite[Section VI.3., Proposition 3.3]{JT:1980} for the operator $\mathcal{L}$ and the point $x_{0}$, where $\mathcal{L}$ acts the function $\frac{1}{4}-||\rho(a)||^{2}$ on $X$.  We conclude that either $\frac{1}{2}-||\rho(a)||>0$ or $\frac{1}{2}-||\rho(a)||$ is a constant function. This proves that $\hat{h}_{a}$ is a Riemannian metric.\\

Now, assume that
$a\in H^{0}\big(\overline{X},K^{2}\otimes\mathcal{O}_{\overline{X}}(D-2\tilde{D}+D_{1}-D_{4})\big)$.
Note that the solution of the self-duality equation depends continuously on the initial data $a$. Furthermore, by (\ref{align:local expression}), for any $\theta_{i}\neq 0$,   $||\rho(a)||$ is bounded near $p_{i}$.  Thus, for any $a\in\mathcal{U}\subset H^{0}\big(\overline{X},K^{2}\otimes\mathcal{O}_{\overline{X}}(D-2\tilde{D}+D_{1}-D_{4})\big)$ in an appropriate open neighborhood $\mathcal{U}$ of $0$, the induced symmetric bilinear form $\hat{h}_{a}$ is non-degenerate by continuity, and therefore, it defines a Riemannian metric.\\

2. Note that $\hat{h}_{a}$ is a Riemannian metric, and $||\rho(a)||<\frac{1}{2}$ for all $x\in X$. From the computation in the proof of Theorem (11.2)(ii) in \cite[p. 120]{Hitchin:1987}, we conclude that $\hat{h}_{a}$ is a metric of curvature $-1$.

In a neighborhood of $x\in \overline{X}$, we can express $a=f(z){{\rm d}z}^{2}$, and
\begin{align*}
\hat{h}_{a}&=2f(z){{\rm d}z}^{2}+\left(g_{a}+4|f(z)|^{2}\cdot \frac{1}{g_{a}}\right){{\rm d}z}\otimes {{\rm d}\bar{z}}+2\overline{f(z)}{{\rm d}\bar{z}}^{2}\\
&=g_{a}\left({{\rm d}z}+\frac{1}{g_{a}}\cdot 2\overline{f(z)}{{\rm d}\bar{z}}\right)\left({{\rm d}\bar{z}}+\frac{1}{g_{a}}\cdot 2f(z){{\rm d}z}\right)
\end{align*}
We consider the measurable Beltrami differential $\beta$ locally given by
$\frac{2}{g_{a}}\cdot \frac{\overline{f(z)}{{\rm d}\bar{z}}}{{\rm d}z}$.
Note that $|\frac{2}{g_{a}}\cdot \overline{f(z)}|=||2\rho(a)||$, so
from the above, it follows that for $a\in \mathcal{U}$, we have
$||\beta||_{\infty}<1$.
By Proposition 4.8.12 in \cite{Hubbard:2006}, there exists a Riemann surface structure $\bar{\Sigma}$ on the compact topological surface underlying $\overline{X}$ such that the metric $\hat{h}_{a}$ on $\bar{\Sigma}\setminus {\rm supp\, D}=X$ lies in the conformal class determined by $\bar{\Sigma}$. Since an isolated singularity of a conformal hyperbolic metric is either a cusp or a cone singularity,  we conclude that $\hat{h}_{a}$ is a singular hyperbolic metric on $\bar{\Sigma}$ with singularities in ${\rm supp\, D}$.

It remains to show that $\hat{h}_{a}$ and $h_{0}$ have the same singularity type. First, recall the established fact that the Hermitian metrics $H_{0}$ and $H_{a}$ on $E_{0}$ are mutually bounded. This implies that the Riemannian metrics $h_{0}$ and $h_{a}$ on $X$ are also mutually bounded. In a neighborhood of $p_{i}$, we have
\begin{align}
\label{align:metric}
\hat{h}_{a}=g_{a}\left({{\rm d}z}+\frac{1}{g_{a}}\cdot 2\overline{f(z)}{{\rm d}\bar{z}}\right)\left({{\rm d}\bar{z}}+\frac{1}{g_{a}}\cdot 2f(z){{\rm d}z}\right)
\end{align}
where $z$ is the complex local coordinate of $\overline{X}$ in this neighborhood, and $a=f(z){{\rm d}z}^{2}$. Suppose that $w$ is the complex local coordinate near $p_{i}$ on $\bar{\Sigma}$. Since the metric $\hat{h}_{a}$ lies  in the conformal class of $\bar{\Sigma}$, we have
\begin{align}
\label{align:metric under new coordinate}
\hat{h}_{a}&=\rho(w)|{{\rm d}w}|^{2}\notag\\
&=\rho(z)|w_{z}|^{2} \left\lvert{{\rm d}z}+\frac{w_{\bar{z}}}{w_{z}}{{\rm d}\bar{z}}\right\rvert^{2}
\end{align}
Comparing the expressions (\ref{align:metric}) and (\ref{align:metric under new coordinate}), we see that $g_{a}=\rho(z)|w_{z}|^{2}$. Note that we can write $w$ as $w=\sum _{m,n\geq 0}c_{mn}z^{m}\bar{z}^{n}$,  where $c_{00}=0$ and $|c_{10}|>|c_{01}|$ by orientation-preserving considerations. Thus, as $z\rightarrow 0$, $|w_{z}|\sim c_{10}$, and $|w|$ and $|z|$ are mutually bounded. Since $h_{0}$ and $h_{a}$ are mutually bounded, we conclude that near $p_{i}$, $g_{a}\sim |z|^{2\theta_{i}-2}$ for $\theta_{i}>0$, and $g_{a}\sim |z|^{-2}(\log |z|)^{-2}$ for $\theta_{i}=0$ near $p_{i}$. Together with the fact that $\hat{h}_{a}$ is a singular hyperbolic metric on $\bar{\Sigma}$ with singularities in ${\rm supp\, D}$, we obtain the desired result.

\end{proof}

Below, we present the proofs of two corollaries of Theorem \ref{theorem:main theorem}.

\begin{proof}[Proof of Corollary \ref{corollary:restricted angles}]
Note that if we assume all $\theta_{i}\in[0,\frac{1}{2})$, then $D_{1}=D_{3}=D_{4}=0$. For any $a\in H^{0}\big(\overline{X},K^{2}\otimes\mathcal{O}_{\overline{X}}(D)\big)$, by (\ref{align:local expression}), for any $\theta_{i}$,   $||\rho(a)||$ convergences to zero as we approach the punctures. Thus, by applying the strong maximum principle, we conclude that $\hat{h}_{a}$ gives a Riemannian metric of constant Gaussian curvature $-1$. 
\end{proof}

\begin{proof}[Proof of Corollary \ref{corollary:isometric}]
 Let $\widetilde{{\rm d}s^{2}}$ be a Riemannian metric of constant curvature $-1$ on the $C^{\infty}$ surface $X$, which has cone singularities in ${\rm supp\, D}$ with cone angles in the range $(0,\pi]$. Additionally, $\widetilde{{\rm d}s^{2}}$ induces a compatible complex structure $M$ on $X$, which can be extended to $\overline{X}$, denoted by $\bar{M}$ (cf. \cite[ Section 2]{Judge:1995}).

According to Theorem 2 in \cite{Redman:2015}, there exists a unique harmonic diffeomorphism $u\colon X\rightarrow (M, \tilde{{\rm d}s^{2}})$ in the isotropy class (fixing the each point in ${\rm supp\, D}$) of the identity. Let $z$ denote a local isothermal coordinate on $X$, and assume that $\widetilde{{\rm d}s^{2}}=\sigma^{2}(u)|{\rm d}u|^{2}$.  We then have the following expression
\begin{align*}
u^{*}\widetilde{{\rm d}s^{2}}=&a+b+\bar{a}\\
=&q {\rm d}z^{2}+\sigma^{2}(u)(|\partial_{z}u|^{2}+|\bar{\partial}_{z}u|^{2})|{\rm d}z|^{2}+\bar{q} {\rm d}\bar{z}^{2},
\end{align*}
where $a$ is holomorphic on $X$ and meromorphic on $\overline{X}$, with at most simple poles at the points $p_{i}$ (cf. \cite[Section 5.1]{Redman:2015}).

Following Hitchin's proof, since $\widetilde{{\rm d}s^{2}}$ is a metric, we have $b^{2}-4a\bar{a}>0$ for all $x\in X$, and hence the root
\[
h_{a}=\frac{1}{2}\big(b+\sqrt{(b^{2}-4a\bar{a})}\big)
\]
to the equation $h_{a}+(a\bar{a}/h_{a})=b$ is everywhere positive and thus defines a metric on $X$ compatible with its complex structure.

Reversing the calculation of Theorem (11.2)(ii) in \cite[pg. 120]{Hitchin:1987} and using the fact that $u^{*}\widetilde{{\rm d}s^{2}}$ has constant curvature $-1$, we recover the self-duality equation (\ref{eqn:F_{1}}). Moreover, since $u^{*}\widetilde{{\rm d}s^{2}}$ shares the same type of singularities as $\widetilde{{\rm d}s^{2}}$ in ${\rm supp\, D}$ and $h_{a}\sim b$, the metrics $h_{a}$ and $\widetilde{{\rm d}s^{2}}$ are mutually bounded near the singularities.

Consequently, any conically singular metric of constant curvature $-1$ on $\overline{X}$ with cone singularities in ${\rm supp\, D}$ and cone angles in $(0,\pi]$ is isometric to a metric constructed from a solution to the self-duality equations.

\end{proof}

\begin{remark}
In the proof of Corollary \ref{corollary:isometric}, we rely on the existence of a unique harmonic diffeomorphism $u\colon X\rightarrow (M, \widetilde{{\rm d}s^{2}})$ in the isotropy class of the identity. However, this result requires that all cone angles of $\widetilde{{\rm d}s^{2}}$ lie in the interval $(0,\pi]$. This raises a natural question about the broader scenario:
Specifically, does there exist a unique harmonic diffeomorphism $u\colon X\rightarrow (M, \widetilde{{\rm d}s^{2}})$ in the isotropy class of the identity when $\widetilde{{\rm d}s^{2}}$ is a singular hyperbolic metric with cusp or cone singularities? If so, this would allow us to eliminate the cone angle restriction in Corollary \ref{corollary:isometric}.
\end{remark}

\section{Acknowledgments}
The authors deeply appreciate Mr. Junming Zhang for his invaluable discussions and support throughout this project, particularly his insightful suggestions on Lemma \ref{lemma:construction of parabolic Higgs bundle} and his many constructive recommendations on the manuscript, all of which have greatly enhanced this work.
The authors would like to thank Qiongling Li and Takuro Mochizuki for helpful discussions and comments. B.X. is supported in part by the Project of Stable Support for Youth Team in Basic Research Field, CAS (Grant No. YSBR-001) and NSFC (Grant No. 12271495). Y.F. is supported in part by the National Key R\&D Program of China No. 2022YFA1006600, Fundamental Research Funds for the Central Universities and Nankai Zhide Foundation.
\newcommand{\etalchar}[1]{$^{#1}$}
\providecommand{\bysame}{\leavevmode\hbox to3em{\hrulefill}\thinspace}
\providecommand{\MR}{\relax\ifhmode\unskip\space\fi MR }
\providecommand{\MRhref}[2]{%
  \href{http://www.ams.org/mathscinet-getitem?mr=#1}{#2}
}
\providecommand{\href}[2]{#2}

\smallskip

\noindent
Yu Feng \\
\textsc{
Chern Institute of Mathematics and LPMC\\
Nankai University\\
Tianjin 300071, China}\\
\textit{E-mail address: }\texttt{yuf@nankai.edu.cn}
\medskip

\noindent
\noindent
Bin Xu \\
\textsc{
School of Mathematical Sciences\\
University of Science and Technology of China\\
Hefei 230026, China}\\
\textit{E-mail address: }\texttt{bxu@ustc.edu.cn}
\medskip

\end{document}